\newcommand*{\mailto}[1]{\href{mailto:#1}{\nolinkurl{#1}}}
\newtheorem{theorem}{Theorem}[section]
\newtheorem{definition}[theorem]{Definition}
\newtheorem{lemma}[theorem]{Lemma}
\newtheorem{corollary}[theorem]{Corollary}
\newtheorem{hypothesis}[theorem]{Hypothesis}
\newtheorem{remark}[theorem]{Remark}
\newcommand{\R}{\mathbb{R}}
\newcommand{\Z}{\mathbb{Z}}
\newcommand{\N}{\mathbb{N}}
\newcommand{\C}{\mathbb{C}}
\newcommand{\nn}{\nonumber}
\newcommand{\be}{\begin{equation}}
\newcommand{\ee}{\end{equation}}
\newcommand{\bea}{\begin{eqnarray}}
\newcommand{\eea}{\end{eqnarray}}
\newcommand{\ba}{\begin{array}}
\newcommand{\ea}{\end{array}}
\newcommand{\ol}{\overline}
\newcommand{\ul}{\underline}
\newcommand{\ti}{\tilde}
\renewcommand{\mod}{\,\operatorname{mod}\, }
\newcommand{\id}{{\rm 1\hspace{-0.6ex}l}}
\newcommand{\I}{\mathrm{i}}
\newcommand{\E}{\mathrm{e}}
\newcommand{\Ran}{\operatorname{Ran}}
\newcommand{\db}{\mathfrak{D}}
\newcommand{\spr}[2]{\langle #1 , #2 \rangle}
\newcommand{\floor}[1]{\lfloor#1 \rfloor}
\newcommand{\ceil}[1]{\lceil#1 \rceil}
\newcommand{\eps}{\varepsilon}
\newcommand{\vphi}{\varphi}
\newcommand{\sig}{\sigma}
\newcommand{\lam}{\lambda}
\numberwithin{equation}{section}
\begin{document}

\title{Relative Oscillation Theory for Dirac Operators}

\author[R. Stadler]{Robert Stadler}
\address{Faculty of Mathematics\\ University of Vienna\\
Nordbergstrasse 15\\ 1090 Wien\\ Austria}

\author[G. Teschl]{Gerald Teschl}
\address{Faculty of Mathematics\\
Nordbergstrasse 15\\ 1090 Wien\\ Austria\\ and International
Erwin Schr\"odinger
Institute for Mathematical Physics\\ Boltzmanngasse 9\\ 1090 Wien\\ Austria}
\email{\mailto{Gerald.Teschl@univie.ac.at}}
\urladdr{\url{http://www.mat.univie.ac.at/~gerald/}}

\thanks{{\it Research supported by the Austrian Science Fund (FWF) under Grant No.\ Y330}}
\thanks{{\it J. Math. Anal. Appl. {\bf 371}, 638--648 (2010)}}

\keywords{Oscillation theory, Dirac operators, spectral theory}
\subjclass[2000]{Primary 34C10, 34B24; Secondary 34L20, 34L05}

\begin{abstract}
We develop relative oscillation theory for one-dimensional Dirac operators which, rather than measuring the spectrum of
one single operator, measures the difference between the spectra of two different operators.
This is done by replacing zeros of solutions of one operator by weighted zeros of Wronskians of solutions of two different
operators. In particular, we show that a Sturm-type comparison theorem still holds in this situation and demonstrate how
this can be used to investigate the number of eigenvalues in essential spectral gaps. Furthermore, the connection with
Krein's spectral shift function is established. As an application we extend a result by K.M.\ Schmidt on the finiteness/infiniteness
of the number of eigenvalues in essential spectral gaps of perturbed periodic Dirac operators.
\end{abstract}

\maketitle

\section{Introduction}

To set the stage, let $I=(a,b) \subseteq \R$ (with $-\infty \le
a < b \le \infty$) be an arbitrary interval and consider the
Dirac differential expression
\begin{equation}\label{dirac}
\tau = \frac{1}{\I} \sig_2 \frac{d}{dx} + \phi(x).
\end{equation}
Here
\begin{equation}
\phi(x) =  \phi_{\rm el}(x)\id  + \phi_{\rm am}(x)\sig_1 +
(m+ \phi_{\rm sc}(x)) \sig_3,
\end{equation}
$\sig_1$, $\sig_2$, $\sig_3$ denote the Pauli matrices
\begin{equation}
\sig_1=\left(\ba{cc} 0 & 1 \\ 1 & 0\ea\right), \quad 
\sig_2=\left(\ba{cc} 0 & -\I \\ \I & 0\ea\right), \quad 
\sig_3=\left(\ba{cc} 1 & 0 \\ 0 & -1\ea\right),
\end{equation}
and $m$, $\phi_{\rm sc}$, $\phi_{\rm el}$, and $\phi_{\rm am}$ are interpreted as
mass, scalar potential, electrostatic potential, and anomalous
magnetic moment, respectively (see \cite{th}, Chapter~4). 
As usual we require $m\in[0,\infty)$ and $\phi_{\rm sc}, \phi_{\rm el}, \phi_{\rm
am}
\in L^1_{loc}(I)$ real-valued. We don't include a
magnetic moment $\hat{\tau} = \tau +\sig_2 \phi_{\rm mg}(x)$
since it can be easily eliminated by a simple gauge transformation
$\tau = U \hat{\tau} U^{-1}$, $U =\exp(\I\int^x \phi_{\rm mg}(r) dr)$
(there is also a gauge transformation which gets rid of $\phi_{am}$ or
$\phi_{\rm el}$ (see \cite{ls}, Section~7.1.1)). 

If $\tau$ is limit point at both $a$ and $b$, then $\tau$ gives rise to a unique
self-adjoint operator $H$ when defined maximally (cf., e.g., \cite{ls},
\cite{wdl}, \cite{wd1}). Otherwise, we need to fix a boundary
condition at each endpoint where $\tau$ is limit circle.

Explicitly, $H$ is given by
\begin{equation}
H: \ba[t]{lcl} \db(H) &\to& L^2(I,\C^2) \\ f &\mapsto& \tau f \ea ,
\end{equation}
where
\begin{equation} \label{domH}
\db(H) = \{ f \in L^2(I,\C^2) | \ba[t]{l} f \in AC_{loc}(I,\C^2), \, \tau f \in
L^2(I,\C^2),\\ W_a(u_-,f) = W_b(u_+,f) =0 \} \ea
\end{equation}
with
\begin{equation}
W_x(f,g) = \I \spr{f^*(x)}{\sig_2 g(x)} = f_1(x) g_2(x) - f_2(x) g_1(x)
\end{equation}
the usual Wronskian (we remark that the limit $W_{a,b}(.,..) = \lim_{x \to
a,b} W_x(.,..)$ exists for functions as in (\ref{domH})). Here the function
$u_-$ (resp.\ $u_+$) used to generate the boundary condition at $a$
(resp.\ $b$) can be chosen to be a nontrivial solution of $\tau u =0$ if $\tau$
is limit circle at $a$ (resp.\ $b$) and zero else.

We refer to the monographs \cite{ls}, \cite{wdl}, \cite{wei2} for background
and also \cite{th} for further information about Dirac operators and their
applications.

However, even though the Dirac operator is as important to
relativistic quantum mechanics as the Schr\"odinger operator to
nonrelativistic quantum mechanics, much less is known about its discrete
spectrum. The main reason of course being that in contradistinction to
typical Schr\"odinger operators, Dirac operators are not bounded from below
and thus approaches relying on semi-boundedness are not applicable.

Our aim in the present paper is to develop what we will call relative oscillation
theory for a pair of Dirac operators $H_1$ and $H_0$ associated with
two potentials $\phi_1$ and $\phi_0$ as above. As we will show, it turns
out to be an effective tool for both counting eigenvalues in essential spectral
gaps as well as for investigation the accumulation of eigenvalues at the
boundary of an essential spectral gap.

Let $\spr{f}{g} = f_1^* g_1 + f_2^* g_2$ and $|f| = \sqrt{|f_1|^2+|f_2|^2}$ denote the
scalar product and norm in $\C^2$. Our key ingredient will be the Wronskian
of two (nontrivial) real-valued solutions $u_0$ and $u_1$ satisfying
$\tau_0 u_0 = \lam_0 u_0$ and $\tau_1 u_1 = \lam_1 u_1$. Then
we define a Pr\"ufer angle for the Wronskian $W(u_0,u_1)$ via
\begin{equation}
\begin{pmatrix} W_x(u_1,u_0) \\ W_x(u_1,-\I\sig_2 u_0) \end{pmatrix} =
R(x) \begin{pmatrix} \sin(\psi(x)) \\ \cos(\psi(x)) \end{pmatrix}.
\end{equation}
Note that $\psi(x)$ is uniquely determined up to a multiple of $2\pi$ by
the requirement that $\psi(x)$ should be continuous since the two
Wronskians cannot vanish simultaneously.

The total difference
\begin{equation}
\#_{(c,d)}(u_0,u_1) = \ceil{\Delta_{1,0}(d) / \pi} - \floor{\Delta_{1,0}(c) / \pi} -1
\end{equation}
will then be called the weighted number of sign flips of the
Wronskian $W(u_0,u_1)$ in the interval $(c,d) \subset I$ (with $a<c<d<b$).
Here $\floor{x} = \max \{n \in \Z \,|\, n \leq x \}$ and $\ceil{x} = \min \{n \in \Z \,|\, n \geq x \}$
are the usual floor and ceiling functions.

In fact, $\#_{(c,d)}(u_0,u_1)$ counts the number of sign flips of $W(u_0,u_1)$ where
a sign flip is counted as $+1$ if $\psi$ increases along the sign flip and as  $-1$
if $\psi$ decreases. Moreover, one can show that a zero $x_0$ is counted as $+1$ if
$\spr{u_0(x_0)}{\Delta\phi(x_0)u_0(x_0)} >0$ and as $-1$ if
$\spr{u_0(x_0)}{\Delta\phi(x_0)u_0(x_0)} <0$, where
\begin{equation}
\Delta\phi= \phi_1 -\phi_0.
\end{equation}

We will also set
\begin{equation}
\#(u_0,u_1) = \lim_{c\downarrow a, d\uparrow b} \#_{(c,d)}(u_0,u_1)
\end{equation}
provided this limit exists. This will for example be the case if the perturbation is
of a definite sign, $\Delta\phi(x) \ge 0$ or $\Delta\phi(x) \le 0$, at least for $x$
near $a$ and $b$. We will call $\tau_1-\lam_1$ relatively nonoscillatory with respect to
$\tau_0-\lam_0$ if $\#(u_0,u_1)$ is finite and relatively oscillatory otherwise.

Our first result implies that if we choose $u_0$ and $u_1$ to be Weyl solutions, then
the weighted number of sign flips counts precisely the eigenvalue difference.
Recall that a solution $u_-(z,.)$ of $\tau u =z u$ is called Weyl solution at $a$ if it
is square integrable near $a$ and fulfills the boundary condition of $H$ at $a$
(if there is any, i.e., if $\tau$ is limit circle at $a$). Such a solution is unique
up to a constant if it exists (e.g.\ if $z\not\in \sig_{ess}(H)$) and it can be chosen to
be real for $z\in\R$. Similarly a Weyl solution $u_+(z,.)$ at $b$ is defined.

Finally, denote by $P_\Omega(H)$, $\Omega\subseteq\R$, the family of spectral
projections associated with the self-adjoint operator $H$ (see e.g.\ \cite{temmq}).

\begin{theorem}\label{thm:wronskzeros}
Let $H_0$, $H_1$ be self-adjoint operators associated with $\tau_0$, $\tau_1$,
respectively, and separated boundary conditions. Suppose 
\begin{enumerate}
\item
$\Delta\phi \le 0$, near singular endpoints,
\item
$\lim_{x\to a} \Delta\phi(x) = 0$ if $a$ is singular and
$\lim_{x\to b} \Delta\phi(x) = 0$ if $b$ is singular,
\item
$H_0$ and $H_1$ are associated with the same boundary conditions near $a$ and $b$,
that is, $u_{0,-}(\lam)$ satisfies the boundary condition of $H_1$ at $a$ (if any) and
$u_{1,+}(\lam)$ satisfies the boundary condition of $H_0$ at $b$ (if any).
\end{enumerate}

Suppose $\lam_0 < \inf\sig_{ess}(H_0)$. Then
\be\label{eq:wronskzeros0}
\dim\Ran P_{(-\infty,\lam_0)}(H_1) - \dim\Ran P_{(-\infty,\lam_0]}(H_0)
= \#(u_{1,\mp} (\lam_0), u_{0,\pm} (\lam_0)).
\ee

Suppose $\sig_{ess}(H_0) \cap [\lam_0,\lam_1] = \emptyset$. Then $\tau_1-\lam_0$ is
relatively nonoscillatory with respect to $\tau_0-\lam_0$ and
\begin{align}
\nonumber
&\dim\Ran P_{[\lam_0, \lam_1)} (H_1) - \dim\Ran P_{(\lam_0, \lam_1]} (H_0)\\ \label{eq:wronskzeros1}
& \qquad = \#(u_{1,\mp} (\lam_1), u_{0,\pm} (\lam_1)) - 
\#(u_{1,\mp} (\lam_0), u_{0,\pm} (\lam_0)).
\end{align}
\end{theorem}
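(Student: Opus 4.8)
The plan is to deduce the singular (whole-interval) statement from a regularized version on compact subintervals, so the first step is to replace $H_0,H_1$ by their restrictions $H_0^{c,d},H_1^{c,d}$ to $L^2((c,d),\C^2)$ with $a<c<d<b$, equipping both with the separated boundary conditions generated at $c$ by $u_{1,-}(\lam_0)$ and at $d$ by $u_{0,+}(\lam_0)$. Hypothesis (iii) is precisely what guarantees that these conditions are simultaneously admissible for $\tau_0$ and $\tau_1$, so that no spurious endpoint contribution arises later.

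The core of the argument is a finite-interval relative oscillation identity,
\[
\dim\Ran P_{(-\infty,\lam_0)}(H_1^{c,d}) - \dim\Ran P_{(-\infty,\lam_0]}(H_0^{c,d}) = \#_{(c,d)}(u_{1,\mp}(\lam_0),u_{0,\pm}(\lam_0)).
\]
Its proof must not count the eigenvalues of either operator individually: being Dirac operators, $H_0^{c,d}$ and $H_1^{c,d}$ have eigenvalues accumulating at both $\pm\infty$, so each half-line projection has infinite rank and the left-hand side must be read as a renormalized difference. I would interpolate via $\phi_t=(1-t)\phi_0+t\phi_1$ and interpret the left-hand side as the (finite) spectral flow of $H_t^{c,d}$ across $\lam_0$ as $t$ runs from $0$ to $1$. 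The decisive claim is that this spectral flow equals the weighted sign-flip count: writing the Wronskian Pr\"ufer angle $\psi$ (that is, $\Delta_{1,0}$) as the difference $\theta_1-\theta_0$ of the individual Pr\"ufer angles of $u_1$ and $u_0$, the bi-infinite tails cancel and the finite net rotation of $\psi$ across $[c,d]$ is exactly $\#_{(c,d)}$, with the floor/ceiling terms and the open/closed asymmetry of the two projections recording whether the boundary solutions satisfy the imposed condition precisely at $\lam_0$.

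Next I would pass to the limit $c\downarrow a$, $d\uparrow b$. Since $\lam_0<\inf\sig_{ess}(H_0)$ (and correspondingly for $H_1$), each full operator has only finitely many eigenvalues below $\lam_0$, so the left-hand side of \eqref{eq:wronskzeros0} is genuinely finite; moreover the restrictions converge to $H_0,H_1$ in strong resolvent sense, the deep eigenvalues pair off, and the spectral flow stabilizes to this finite difference. For the right-hand side, hypotheses (i) and (ii) are what force $\#(u_0,u_1)$ to be finite: the definiteness $\Delta\phi\le 0$ near the singular endpoints fixes the sign of any weighted zero there and rules out oscillation, while $\Delta\phi\to 0$ prevents residual rotation of $\psi$ from escaping to $a$ or $b$. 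This simultaneously establishes relative nonoscillation and the convergence $\#_{(c,d)}\to\#$, yielding \eqref{eq:wronskzeros0}.

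Finally, \eqref{eq:wronskzeros1} follows by applying \eqref{eq:wronskzeros0} at $\lam_0$ and at $\lam_1$ and subtracting; the two half-line projections telescope into $\dim\Ran P_{[\lam_0,\lam_1)}(H_1)-\dim\Ran P_{(\lam_0,\lam_1]}(H_0)$, with the open/closed endpoints again matching the roles of $\lam_0,\lam_1$ as possible eigenvalues, and $\sig_{ess}(H_0)\cap[\lam_0,\lam_1]=\emptyset$ guaranteeing relative nonoscillation throughout the gap. I expect the main obstacle to be the two finiteness statements: showing in the finite-interval step that the bi-infinite spectral tails of the two Dirac operators cancel exactly, so that the spectral flow is well defined and equals the Wronskian count, and showing in the limiting step that no sign flip of the Wronskian escapes to a singular endpoint. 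The latter is exactly where conditions (i)--(iii) are indispensable, and controlling the interaction of the floor/ceiling bookkeeping with the limit-circle boundary conditions is the delicate technical point.
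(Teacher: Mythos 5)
Your strategy for \eqref{eq:wronskzeros0} --- truncation to $(c,d)$ with boundary conditions generated by Weyl solutions, interpolation $\phi_t=(1-t)\phi_0+t\phi_1$ read as a spectral flow across $\lam_0$, then the limit $c\downarrow a$, $d\uparrow b$ --- is essentially the paper's own route: the regular case is Theorem~\ref{thm:reg}, proved in Section~\ref{sec:proofreg} by exactly this interpolation together with Pr\"ufer-angle monotonicity, and the singular statement is then obtained by approximation following Section~6 of \cite{kt}. Your observation that for Dirac operators the half-line projections of the truncated operators have infinite rank, so that the finite-interval identity can only be read as a renormalized (spectral-flow) statement, is correct and is precisely the reading Theorem~\ref{thm:reg} requires in the Dirac setting. (In the limiting step, be aware that strong resolvent convergence only gives semicontinuity of spectral projections in one direction; the opposite direction requires control on where eigenvalues of the truncated operators can lie, which is where the paper's modification of Lemma~6.4 of \cite{kt} --- the $\limsup$/$\liminf$ of the eigenvalues of the matrix $\ti\phi$ --- enters.)

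The genuine gap is your last step. You derive \eqref{eq:wronskzeros1} by applying \eqref{eq:wronskzeros0} at $\lam_0$ and at $\lam_1$ and subtracting, but \eqref{eq:wronskzeros0} is available only for $\lam<\inf\sig_{ess}(H_0)$, whereas the hypothesis of the second statement, $\sig_{ess}(H_0)\cap[\lam_0,\lam_1]=\emptyset$, allows --- and for Dirac operators typically forces --- essential spectrum \emph{below} $\lam_0$. For the operators this theory is aimed at, the essential spectrum is unbounded from below: in Theorem~\ref{thm:periodic} the bands $[E_{2j},E_{2j+1}]$, $j\in\Z$, extend to $-\infty$, so $\inf\sig_{ess}(H_0)=-\infty$, \eqref{eq:wronskzeros0} is vacuous, and the projections $P_{(-\infty,\lam_j)}(H_1)$, $P_{(-\infty,\lam_j]}(H_0)$ all have infinite rank; your telescoping is then $\infty-\infty$, the very phenomenon you yourself flagged at the truncated level. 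Thus \eqref{eq:wronskzeros1}, which is the principal content of the theorem for Dirac operators (and the one used for Theorem~\ref{thm:periodic}), is not proved by your argument. The repair is to telescope \emph{before} passing to the limit: for the truncated, regular operators the spectral flows across $\lam_0$ and across $\lam_1$ are finite and may be subtracted, yielding the bounded-interval identity for $H_j^{c,d}$; one then takes $c\downarrow a$, $d\uparrow b$ for the (finite) bounded-interval differences, using Lemma~\ref{lem:nonoscingap} for relative nonoscillation in the gap and two-sided estimates in the spirit of Lemma~\ref{lem:estproj} combined with Theorem~\ref{thm:gst}. This is how Section~6 of \cite{kt}, which the paper's proof invokes verbatim, treats gaps lying inside the essential spectrum.
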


The proof will be given at the end of Section~\ref{sec:rot}.

\begin{remark}
Note that condition (ii) implies $\sig_{ess}(H_0)=\sig_{ess}(H_1)$ (cf.\ Lemma~\ref{lem:nonoscingap}
below). In addition, (ii) implies that any function which is in $\db(\tau_0)$ near $a$ (or $b$)
is also in $\db(\tau_1)$ near $a$ (or $b$), and vice versa. Hence condition (iii) is
well-posed.
\end{remark}

In the case where the resolvent difference of $H_1$ and $H_0$ is trace class, the difference
in \eqref{eq:wronskzeros1} as opposed to \eqref{eq:wronskzeros0} can be avoided if we
replace the left-hand side by Krein's spectral shift function $\xi(\lam,H_1,H_0)$ (see \cite{yafams}
for more information on Krein's spectral shift function). In order to fix the unknown constant in the spectral
shift function, we will require that $H_0$ and $H_1$ are connected via a path within the set of
operators  whose resolvent difference with $H_0$ are trace class. Hence we will require

\begin{hypothesis} \label{hyp:h0h1}
Suppose $H_0$  and $H_1$ are self-adjoint operators associated with $\tau_0$ and $\tau_1$
and separated boundary conditions. Assume that
\begin{itemize}
\item $\Delta\phi$ is relatively bounded with respect to $H_0$ with $H_0$-bound less than one, and
\item $\sqrt{|\Delta\phi|} (H_0-z)^{-1}$ is Hilbert--Schmidt for one (and hence for all)
$z\in\rho(H_0)$.
\end{itemize}
\end{hypothesis}

It was shown in \cite[Sect.~8]{kt} that these conditions ensure that we can interpolate
between $H_0$ and $H_1$ using operators $H_\eps$, $\eps\in[0,1]$, such that the resolvent difference
of $H_0$ and $H_\eps$ is continuous in $\eps$ with respect to the trace norm. Hence we can fix
$\xi(\lam,H_1,H_0)$ by requiring $\eps \mapsto \xi(\lam,H_\eps,H_0)$ to be continuous in
$L^1(\R,(\lam^2+1)^{-1}d\lam)$, where we of course set $\xi(\lam,H_0,H_0)=0$.
While $\xi$ is only defined a.e., it is constant on the intersection
of the resolvent sets $\R\cap\rho(H_0)\cap\rho(H_1)$, and we will require it to be continuous there.
In particular, note that by Weyl's theorem the essential spectra of $H_0$ and
$H_1$ are equal, $\sig_{ess}(H_0)=\sig_{ess}(H_1)$. Then we have the following result:

\begin{theorem} \label{thmsing}
Let $H_0$, $H_1$ satisfy Hypothesis~\ref{hyp:h0h1}. Then
for every $\lam\in\R\cap\rho(H_0)\cap\rho(H_1)$ we have
\be
\xi(\lam,H_1,H_0) =
\#(\psi_{0,\pm}(\lam), \psi_{1,\mp}(\lam)).
\ee
\end{theorem}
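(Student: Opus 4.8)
The plan is to connect the spectral shift function to the relative oscillation count (the weighted number of sign flips of the Wronskian) through the eigenvalue-counting result already established in Theorem~\ref{thm:wronskzeros}, and then to extend this from the setting of a spectral gap to a general point $\lam\in\R\cap\rho(H_0)\cap\rho(H_1)$ by a continuity/homotopy argument along the interpolating family $H_\eps$.

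First I would recall the defining property of Krein's spectral shift function: for $\lam$ below the essential spectrum (or more generally in a gap), $\xi(\lam,H_1,H_0)$ equals the (signed) difference of the number of eigenvalues of $H_1$ and $H_0$ below $\lam$. This is precisely the quantity computed on the left-hand side of \eqref{eq:wronskzeros0} in Theorem~\ref{thm:wronskzeros}, which in turn equals $\#(u_{1,\mp}(\lam),u_{0,\pm}(\lam))$. So in the special case where $\lam$ lies below $\inf\sig_{ess}(H_0)=\inf\sig_{ess}(H_1)$, the identity follows immediately by combining the eigenvalue-difference interpretation of $\xi$ with Theorem~\ref{thm:wronskzeros}, after checking that the Weyl solutions $u_{j,\pm}(\lam)$ coincide (up to normalization) with the $\psi_{j,\pm}(\lam)$ appearing in the statement.

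The main work is to pass to an arbitrary $\lam$ in a resolvent gap of the essential spectrum, where $\xi(\lam,H_1,H_0)$ no longer literally counts eigenvalues below $\lam$ but rather the net eigenvalue flow across $\lam$. Here I would exploit the normalization fixed in the paragraph preceding the theorem: $\xi(\lam,H_\eps,H_0)$ is required to be continuous in $\eps$, with $\xi(\lam,H_0,H_0)=0$. The strategy is to show that both sides of the claimed identity, viewed as functions of $\eps$ along the path $H_\eps$ (equivalently along $\phi_\eps = \phi_0 + \eps\,\Delta\phi$), have the same jumps. On the analytic side, $\xi(\lam,H_\eps,H_0)$ increases or decreases by one precisely when an eigenvalue of $H_\eps$ crosses $\lam$. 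On the oscillation-theoretic side, I would invoke the Sturm-type comparison theorem (announced in the abstract) together with the sign-flip interpretation recorded after the definition of $\#_{(c,d)}$: a Wronskian sign flip contributes $+1$ or $-1$ according to the sign of $\spr{u_0(x_0)}{\Delta\phi(x_0)u_0(x_0)}$, which is exactly the mechanism by which $\#$ records an eigenvalue crossing $\lam$ as $\eps$ varies. Matching these jumps, and using that both quantities vanish at $\eps=0$ and are piecewise constant in $\eps$ between crossings, forces them to agree for all $\eps\in[0,1]$, and in particular for $H_1$.

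The hardest part will be the bookkeeping at the boundary and the verification that the continuity normalization of $\xi$ is compatible with the continuity of the Pr\"ufer angle $\psi$ defining $\#$. Specifically, one must ensure that Hypothesis~\ref{hyp:h0h1} (relative boundedness with $H_0$-bound less than one, and the Hilbert--Schmidt condition) guarantees not only that the family $H_\eps$ is trace-norm resolvent continuous, so that $\xi(\lam,H_\eps,H_0)$ is well-defined and continuous, but also that the Weyl solutions and their Wronskians depend continuously on $\eps$ near the singular endpoints, so that no sign flips are lost or gained at $a$ or $b$ as $\eps$ moves. I expect that this requires a decay argument near the endpoints analogous to conditions (i) and (ii) of Theorem~\ref{thm:wronskzeros}, combined with the relative compactness implied by the Hilbert--Schmidt hypothesis, to control the accumulation of Wronskian zeros and to justify taking the limit $c\downarrow a$, $d\uparrow b$ uniformly in $\eps$.
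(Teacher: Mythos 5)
Your guiding picture---spectral flow along the interpolating family $H_\eps$ should match the weighted sign flips of the Wronskian---is the right way to think about the theorem, but as a proof it has two genuine gaps, and they sit exactly where the real difficulty lies. First, Theorem~\ref{thm:wronskzeros} is simply not available under Hypothesis~\ref{hyp:h0h1}: that theorem requires its conditions (i)--(iii) ($\Delta\phi\le 0$ near singular endpoints, $\Delta\phi\to 0$ there, matched boundary conditions), none of which follow from relative boundedness plus the Hilbert--Schmidt condition; under Hypothesis~\ref{hyp:h0h1} the perturbation may change sign arbitrarily often near $a$ and $b$. So neither your base case nor any application of Theorem~\ref{thm:wronskzeros} to the pairs $(H_0,H_\eps)$ is justified. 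Worse, the existence of the limit $\#(\psi_{0,\pm}(\lam),\psi_{1,\mp}(\lam))$, i.e.\ the equality $\ul{\#}=\ol{\#}$ (relative nonoscillation), is itself part of the assertion of Theorem~\ref{thmsing}; you defer it to ``a decay argument analogous to conditions (i) and (ii)'', but no such conditions are available---this is the obstacle to be overcome, not assumed. (A further point: Dirac operators are never semibounded, so the regime $\lam<\inf\sig_{ess}(H_0)$ you use as an anchor is typically empty; this is precisely why the paper normalizes $\xi$ through the path $H_\eps$ rather than by counting eigenvalues below the spectrum.)

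Second, the jump-matching step is circular as stated. That $\eps\mapsto\xi(\lam,H_\eps,H_0)$ is piecewise constant with unit jumps at eigenvalue crossings needs proof (the normalization is continuity in $L^1(\R,(\lam^2+1)^{-1}d\lam)$, not pointwise, and for non-sign-definite $\Delta\phi$ the eigenvalue branches of $H_\eps$ need not cross $\lam$ transversally, monotonically, or only finitely often); and the claim that $\#(\psi_{0,\pm}(\lam),\psi_{\eps,\mp}(\lam))$ jumps by $\pm 1$ at exactly those $\eps$ is essentially Theorem~\ref{thmsing} itself for the pair $(H_0,H_\eps)$, for which you have no independent tool. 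The paper's proof (it carries over Section~7 of \cite{kt} verbatim) avoids both problems by a different reduction: one first treats sign-definite $\Delta\phi$, approximating it by compactly supported truncations---for these, conditions (i)--(iii) of Theorem~\ref{thm:wronskzeros} hold trivially, so that theorem applies, while the Hilbert--Schmidt condition in Hypothesis~\ref{hyp:h0h1} gives trace-norm resolvent convergence of the truncated operators and hence convergence of $\xi$, and the Pr\"ufer angles converge monotonically on the Wronskian side. The general case is then obtained by splitting $\Delta\phi$ into its positive and negative parts, inserting an intermediate operator, and using additivity of the spectral shift function together with the matching exact (not merely up to $\pm 1$) additivity of the Wronskian counts for Weyl solutions of ordered potentials. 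Your homotopy idea captures the meaning of the result, but without the truncation and sign-splitting machinery it cannot be closed into a proof under Hypothesis~\ref{hyp:h0h1}.
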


Again, the proof will be given at the end of Section~\ref{sec:rot}.

In particular, this result implies that under these assumptions $\tau_1-\lam$ is relatively nonoscillatory
with respect to $\tau_0-\lam$ for every $\lam$ in an essential spectral gap.

Concerning the history of these results we mention that the analogs of Theorem~\ref{thm:wronskzeros}
and Theorem~\ref{thmsing} were first given in the case of Sturm--Liouville operators by Kr\"uger and
Teschl \cite{kt}, \cite{kt2} extending earlier work of Gesztesy, Simon, and Teschl \cite{gst} which
corresponded to the case $H_1=H_0$. In the case of Dirac operators the case $H_1=H_0$ was
first given in Teschl \cite{toscd}.

Finally, we will show how $\#(u_0,u_1)$ can be used to settle the question whether the eigenvalues introduced
by a given perturbation will accumulate at a boundary point of the essential spectrum and apply this
to the case of perturbed periodic Dirac operators.

We first recall some basic facts from the theory of periodic Dirac operators
(cf., e.g., \cite{wdl}, Chapter~12, \cite{wei2}, Chapter~16). Let $H_0$ be a Dirac
operator associated with periodic potential $\phi_0$ of period $\alpha>0$, that is, $\phi_0(x+\alpha) =
\phi_0(x)$, $x\in I=(a,\infty)$. The essential spectrum of $H_0$ is purely absolutely
continuous and consists of a countable number of bands, that is,
\begin{equation}
\sig_{ess}(H_0) = \bigcup_{j\in\Z} [E_{2j},E_{2j+1}]
\end{equation}
with $\cdots E_{2j} < E_{2j+1} \le E_{2j+2} < E_{2j+3}\cdots$. In addition,
in every essential spectral gap there can be at most one eigenvalue.

Moreover, Floquet theory implies the existence of an (anti-)periodic solution
$u_0(E_j,x)$ at each boundary point of the essential spectrum.

To phrase our result, we recall the iterated logarithm $\log_n(x)$ which is defined recursively via
\[
\log_0(x) = x, \qquad \log_n(x) = \log(\log_{n-1}(x)).
\]
Here we use the convention $\log(x)=\log|x|$ for negative values of $x$. Then
$\log_n(x)$ will be continuous for $x>\E_{n-1}$ and positive for $x>\E_n$, where
$\E_{-1}=-\infty$ and $\E_n=\E^{\E_{n-1}}$. Abbreviate further
\[
L_n(x) = \frac{1}{\log_{n+1}'(x)} = \prod_{j=0}^n \log_j(x).
\]
Explicitly we have
\[
L_0(x) =x, \quad L_1(x) = x \log(x), \quad L_2(x) = x \log(x)\log(\log(x)), \quad \dots
\]
With this notation we have the following result:

\begin{theorem}\label{thm:periodic}
Let $E_j$ be a boundary point of the essential spectrum of the periodic operator
$H_0$ and let $u_0(x)$ be a corresponding (anti-)periodic solution of $\tau_0 u_0 = E_j u_0$.

Suppose
\begin{equation}\label{asumper}
\phi_1(x) = \phi_0(x) -\frac{1}{4} \sum_{k=0}^n \frac{1}{L_k(x)^2} \phi_{1,k} + o(L_n(x)^{-2})
\end{equation}
for some constant matrices $\phi_{1,k}$, $0\le k \le n$, and define
\begin{align}\nn
A &= \frac{2}{\alpha} \int_0^\alpha \frac{\spr{u(x)}{((m + \phi_{0,\rm sc}(x)) \sig_3 + \phi_{0,\rm am}(x) \sig_1) u(x)}}{|u(x)|^4} dx,\\
B_k &= -\frac{1}{\alpha} \int_0^\alpha \spr{u(x)}{\phi_{1,k} u(x)} dx, \quad 0\le k \le n.
\end{align}
Then the eigenvalues of $H_1$ accumulate at $E_j$ if
\begin{equation}
A B_0 = \cdots = A B_{n-1} = 1 \quad\text{and}\quad  A B_n > 1
\end{equation}
and the do not  accumulate at $E_j$ if
\begin{equation}
A B_0 = \cdots = A B_{n-1} = 1 \quad\text{and}\quad A B_n < 1.
\end{equation}
\end{theorem}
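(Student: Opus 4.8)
The plan is to translate the accumulation question into a relative oscillation count, and then reduce that count—via a Pr\"ufer analysis combined with Floquet averaging and the relative Sturm comparison theorem—to a sharp iterated-logarithm oscillation test for a scalar Euler-type equation. The constants $A$ and $B_k$ will emerge as, respectively, the band-curvature (effective mass) of the unperturbed periodic problem at $E_j$ and the period-averaged strength of the perturbation in each channel.

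First I would record the reduction. By the eigenvalue-counting interpretation underlying Theorem~\ref{thm:wronskzeros}, the eigenvalues of $H_1$ accumulate at the band edge $E_j$ precisely when $\tau_1-E_j$ is relatively oscillatory with respect to $\tau_0-E_j$, i.e.\ when $\#(u_0,u_1)=\infty$, where $u_0=u_0(E_j,\cdot)$ is the given (anti-)periodic solution and $u_1=u_1(E_j,\cdot)$ is a solution of $\tau_1 u_1=E_j u_1$. Thus the whole problem becomes: decide whether the weighted number of sign flips of $W(u_0,u_1)$ is finite or infinite as $x\to\infty$. Next I would set up the Pr\"ufer angle $\psi(x)$ for $W(u_0,u_1)$ and write down its first-order ODE, whose forcing is controlled by $\spr{u_0(x)}{\Delta\phi(x)\,u_0(x)}$ (recall a zero contributes $\pm1$ according to the sign of this quadratic form). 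Since $u_0$ is $\alpha$-periodic and at a band edge the monodromy has a nontrivial Jordan block, there is a second, linearly growing solution of $\tau_0 u=E_j u$; variation of constants together with averaging of the $\alpha$-periodic coefficients over successive periods collapses the matrix Pr\"ufer dynamics to an effective scalar second-order equation $y''+Q(x)\,y=0$. Feeding in $\Delta\phi=-\tfrac14\sum_{k=0}^n L_k(x)^{-2}\phi_{1,k}+o(L_n(x)^{-2})$ and using $\tfrac1\alpha\int_0^\alpha\spr{u}{\phi_{1,k}u}\,dx=-B_k$, while the unperturbed band structure supplies the multiplicative factor $A$, I expect the averaged coefficient to take the form
\begin{equation}
Q(x)=\frac14\sum_{k=0}^{n} A B_k\,L_k(x)^{-2}+o\big(L_n(x)^{-2}\big).
\end{equation}

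Finally I would conclude with the classical Kneser--Hartman iterated-logarithm criterion for $y''+Q(x)y=0$: the equation is oscillatory if, at the smallest index $k$ with $A B_k\neq1$, one has $A B_k>1$, and nonoscillatory if $A B_k<1$, the preceding equalities $A B_0=\cdots=A B_{k-1}=1$ placing the equation exactly at criticality for each lower iterated-logarithm level. To make this rigorous at the level of the original (non-averaged) system I would invoke the relative Sturm comparison theorem of this paper to sandwich $\#(u_0,u_1)$ between the weighted sign-flip counts for explicit $\alpha$-periodic model perturbations whose coefficients sit just above and just below each threshold; each such model, after the same averaging, is exactly of Euler type and its oscillation is decided by the test above. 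Under $A B_0=\cdots=A B_{n-1}=1$ this yields oscillation (hence accumulation) when $A B_n>1$ and nonoscillation (hence no accumulation) when $A B_n<1$, which is the assertion.

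The main obstacle is the middle step: carrying out the Floquet averaging so that the reduced scalar equation carries the \emph{correct} constant $A$—identifying $A$ with the energy derivative of the Floquet discriminant at $E_j$, i.e.\ the effective mass governing how a perturbation tilts the local spectral position—and then establishing the iterated-logarithm test for the resulting Dirac-reduced Euler equation with sufficient sharpness that neither the $o(L_n^{-2})$ remainder nor the discretization error from averaging over finite periods can shift the threshold. The comparison theorem relieves much of the delicate error control by reducing everything to the explicitly solvable models, so the genuine work concentrates in (i) the averaging and the identification of $A$, and (ii) the oscillation analysis of the model Euler equations at successive critical levels. The borderline case $A B_n=1$ is genuinely undecided by this level of the test and is correctly omitted from the statement.
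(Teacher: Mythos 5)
Your high-level frame agrees with the paper's: accumulation of eigenvalues of $H_1$ at $E_j$ is equivalent (via Theorem~\ref{thm:rosc}, not really Theorem~\ref{thm:wronskzeros}) to $\tau_1-E_j$ being relatively oscillatory with respect to $\tau_0-E_j$, and one then studies a Pr\"ufer angle for $W(u_0,u_1)$, averages over the period, and applies an iterated-logarithm threshold. But everything you label ``the main obstacle'' \emph{is} the proof, and your proposed mechanism for it has concrete problems. The paper does not pass through a scalar equation $y''+Qy=0$ and Kneser--Hartman at all: it takes the second solution $v_0$ from the explicit Rofe--Beketov formula \eqref{alembert}, uses the effective Pr\"ufer angle of Definition~\ref{def:eqpa} with its ODE \eqref{eq:diff1wronski} (Lemma~\ref{lem:lemusualprue}), performs Schmidt's Kepler transformation to obtain a \emph{first-order} Euler-type angular equation with coefficients $A(x)$, $B(x)$, averages over a period, and then invokes the ready-made threshold Lemma~\ref{lem:odebnd} from \cite{kt3}, finishing with the scalings $u_0\to A^{1/2}u_0$ (for $A>0$), $\vphi\to-\vphi$ (for $A<0$), and a separate result of Schmidt for $A=0$. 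This is where $A$ and the products $AB_k$ actually emerge; no identification of $A$ with the derivative of the Floquet discriminant is needed.

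Two specific gaps. First, your structural premise ``at a band edge the monodromy has a nontrivial Jordan block, hence there is a linearly growing second solution'' is false for Dirac operators precisely when $A=0$: since $\hat\phi_0=(m+\phi_{0,\rm sc})\sig_3+\phi_{0,\rm am}\sig_1$ is indefinite, the mean $A$ can vanish (or be negative), and when $A=0$ the solution \eqref{alembert} is bounded and the monodromy is $\pm\id$, so your variation-of-constants reduction has no linearly growing solution to work with. The theorem still asserts something in this case (for $n=0$, $AB_0=0<1$ means no accumulation), which the paper obtains from Proposition~1 of \cite{kms2}; your route simply breaks there, and the sign of $A$ is never addressed. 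Second, your claim that the comparison theorem (Theorem~\ref{thm:scw}) ``relieves much of the delicate error control by reducing everything to the explicitly solvable models'' is illusory: the sandwich models $\Delta\phi=-\tfrac14\sum_k L_k^{-2}\phi_{1,k}\pm\eps L_n^{-2}\id$ are themselves perturbed \emph{periodic} Dirac systems, not Euler equations, so deciding their (non)oscillation requires exactly the Kepler-plus-averaging analysis you are trying to avoid; the comparison step only relocates the work (and, to apply Theorem~\ref{thm:scw}, the pointwise matrix-order comparability must still be checked). So while your expected averaged coefficient $Q(x)=\tfrac14\sum_k AB_k\,L_k(x)^{-2}+o(L_n(x)^{-2})$ and the resulting threshold reading are consistent with the paper's conclusion, the proposal as written does not contain a proof of the step on which the theorem rests.
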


The proof will be given at the end of Section~\ref{sec:roc}.

In the case of Sturm--Liouville operators this result originates in the work of Rofe-Beketov \cite{rb2}--\cite{rb5}
(see also the recent monograph \cite{rbk}) who proved the case $n=0$. His work was recently
improved by Schmidt \cite{kms} who gave a new proof and obtained the cases $n=0,1$. Extending
the approach by Schmidt the general case was obtained in Kr\"uger and Teschl \cite{kt3}. Schmidt
also established the case $n=0,1$ for Dirac operators in \cite{kms2}. In his paper \cite{kms2} he
also gives an equivalent formulation for the criterion in terms of the gradient of the Floquet
discriminant and shows how the above criterion can be applied to radial Dirac operators via
a transformation from \cite{kms0}. In fact, if
\be
\tau_k = \frac{1}{\I} \sig_2 \frac{d}{dr} + \frac{k}{r} \sig_3 + \phi(r), \qquad r\in(0,\infty),
\ee
is a radial Dirac operator (i.e.\  one which arises by separation of variables in spherical coordinates \cite[Sect.~4.6.6]{th}),
then the unitary transformation (\cite[Lem.~3]{kms0})
\be
U f(r) = \begin{pmatrix} \cos(\theta(r)) & - \sin(\vartheta(r))\\ \cos(\vartheta(r)) & \sin(\vartheta(r)) \end{pmatrix}
\begin{pmatrix} f_1(r) \\ f_2(r) \end{pmatrix}, \quad
\vartheta(r) = \frac{1}{2} \arctan\Big(\frac{k}{r}\Big),
\ee
transforms $\tau$ to
\be
U^* \tau U = \frac{1}{\I} \sig_2 \frac{d}{dr} + \left(\sqrt{1+\frac{k^2}{r^2}} -1\right) \sig_3 
+\frac{k}{2(r^2+k^2)} \id + \phi(r).
\ee
Since
\be
\left(\sqrt{1+\frac{k^2}{r^2}} -1\right) \sig_3 
+\frac{k}{2(r^2+k^2)} \id = \frac{k}{2} (k \sig_3 + \id) \frac{1}{r^2} + O(r^{-4})
\ee
our result is directly applicable to this situation.

We also refer to \cite{kms2} and the recent work by Cojuhari \cite{co} for more
on the history of this problem and references to related results. Analogous results for the discrete
case, Jacobi matrices, can be found in \cite{at}.

\section{Relative Oscillation Theory}
\label{sec:rot}

After these preparations we are now ready to develop relative oscillation theory. Our
presentation closely follows \cite{kt}.

\begin{definition}\label{def:wsf}
For $\tau_0$, $\tau_1$ possibly singular Dirac operators as in (\ref{dirac}) on $(a,b)$,
we define
\be
\underline{\#}(u_0,u_1) = \liminf_{d \uparrow b,\,c \downarrow a} \#_{(c,d)}(u_0,u_1)
\quad\mbox{and}\quad
\overline{\#}(u_0,u_1) = \limsup_{d \uparrow b,\,c \downarrow a} \#_{(c,d)}(u_0,u_1),
\ee
where $\tau_j u_j = \lam_j u_j$, $j=0,1$.

We say that $\#(u_0,u_1)$ exists, if
$\overline{\#}(u_0,u_1)=\underline{\#}(u_0,u_1)$, and write
\be
\#(u_0,u_1) = \overline{\#}(u_0,u_1)=\underline{\#}(u_0,u_1)
\ee
in this case.
\end{definition}

By Lemma~\ref{lem:delinc} below one infers that $\#(u_0,u_1)$ exists
if $\phi_0-\lam_0- \phi_1+\lam_1$ has the same definite sign near the endpoints $a$ and $b$.
On the other hand, note that $\#(u_0,u_1)$ might not exist
even if both $a$ and $b$ are regular, since the difference of Pr\"ufer angles might
oscillate around a multiple of $\pi$ near an endpoint. Furthermore, even if it exists,
one has $\#(u_0,u_1) = \#_{(a,b)}(u_0,u_1)$ only if there are no zeros at the
endpoints (or if  $\phi_0-\lam_0-\phi_1+\lam_1\ge 0$ at least near the endpoints).

We begin with our analog of Sturm's comparison theorem for
zeros of Wronskians. We will also establish a triangle-type inequality
which will help us to provide streamlined proofs below.
Both results follow as in \cite{kt}.

\begin{theorem}[Comparison theorem for Wronskians]\label{thm:scw}
Suppose $u_j$ satisfies $\tau_j u_j = \lam_j u_j$, $j=0,1,2$,
where $\lam_0 -\phi_0 \le \lam_1 - \phi_1 \le \lam_2 - \phi_2$.

If $c<d$ are two zeros of $W_x(u_0,u_1)$ such that $W_x(u_0,u_1)$ does not
vanish identically, then there is at least one sign flip of $W_x(u_0,u_2)$ in $(c,d)$.
Similarly, if $c<d$ are two zeros of $W_x(u_1,u_2)$ such that $W_x(u_1,u_2)$ does not
vanish identically, then there is at least one sign flip of $W_x(u_0,u_2)$ in $(c,d)$.
\end{theorem}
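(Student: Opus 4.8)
The plan is to reduce the comparison statement to a monotonicity property of the Prüfer angle $\psi$ associated with the relevant Wronskian, exactly as in the Sturm--Liouville case of \cite{kt}. First I would record the basic first-order differential equation satisfied by $\psi$. Differentiating the defining relation for the Prüfer variables $(R,\psi)$ of the Wronskian $W_x(u_0,u_1)$ and using the Dirac equations $\tau_j u_j = \lam_j u_j$, one obtains (after inserting $\tau = \frac{1}{\I}\sig_2 \frac{d}{dx} + \phi$ and simplifying the $\sig_2$-sandwiched terms) a formula of the shape
\be
\psi'(x) = \frac{\spr{u_0(x)}{(\lam_1-\lam_0-\phi_1+\phi_0)(x)\, u_0(x)}}{|\text{(Prüfer radius factor)}|} \cdot (\text{nonnegative weight}),
\ee
so that the sign of $\psi'$ at a point is governed precisely by the sign of the quadratic form $\spr{u_0}{((\lam_1-\lam_0)-(\phi_1-\phi_0))u_0}$. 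This is the key computation, and it is the natural Dirac analogue of the Riccati/Prüfer identity used in \cite{toscd} and \cite{kt}. The hypothesis $\lam_0-\phi_0 \le \lam_1-\phi_1$ then forces this form to be $\ge 0$, so $\psi$ is nondecreasing; at a genuine zero of $W_x(u_0,u_1)$ where the form is strictly positive, $\psi$ crosses a multiple of $\pi$ transversally.

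Next I would set up the comparison itself. Let $\psi_{01}$ be the Prüfer angle for $W_x(u_0,u_1)$ and $\psi_{02}$ that for $W_x(u_0,u_2)$. Zeros of a Wronskian correspond to $\psi \equiv 0 \pmod \pi$. The two chains of inequalities $\lam_0-\phi_0 \le \lam_1-\phi_1$ and $\lam_1-\phi_1 \le \lam_2-\phi_2$ give, via the identity above, that the ``relative'' Prüfer angle comparing the $(0,2)$-Wronskian to the $(0,1)$-Wronskian is again monotone in the favourable direction. Concretely, between the two consecutive zeros $c<d$ of $W_x(u_0,u_1)$ the angle $\psi_{01}$ increases by a multiple of $\pi$ (at least $\pi$, since $c,d$ are distinct zeros and $W$ does not vanish identically); and because $\lam_2-\phi_2 \ge \lam_1-\phi_1$, the angle $\psi_{02}$ increases at least as fast as $\psi_{01}$ on $(c,d)$. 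Hence $\psi_{02}$ must also traverse an interval of length $\ge \pi$, which forces it to hit a multiple of $\pi$, i.e. a sign flip of $W_x(u_0,u_2)$, strictly inside $(c,d)$.

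For the second assertion, concerning two zeros of $W_x(u_1,u_2)$, I would argue symmetrically, using the Prüfer angle of $W_x(u_1,u_2)$ together with the inequality $\lam_1-\phi_1 \le \lam_2-\phi_2$ to get its monotonicity, and then compare against $\psi_{02}$ using instead the inequality $\lam_0-\phi_0 \le \lam_1-\phi_1$; the roles of the ``lower'' and ``upper'' operators are interchanged but the structure is identical. I expect the main obstacle to be purely technical rather than conceptual: namely, carefully keeping track of which pair of solutions generates each Prüfer angle and verifying that the weight multiplying the quadratic form in $\psi'$ is genuinely nonnegative (this uses that the two Wronskian components in the definition of $\psi$ cannot vanish simultaneously, so the Prüfer radius $R$ stays positive). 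A secondary subtlety is the treatment of the degenerate situation where $W_x(u_0,u_2)$ has a zero of higher order or where the quadratic form vanishes on an interval; one handles this by noting that a nonstrict monotonicity still suffices to produce the crossing, and that ``sign flip'' is counted with the weighted convention so that a tangential (non-sign-changing) zero is consistent with the monotone-but-not-strictly behaviour. Since the excerpt states that ``both results follow as in \cite{kt},'' I would keep the exposition brief and refer to that paper for the routine Prüfer-angle manipulations, presenting in full only the Dirac-specific computation of $\psi'$.
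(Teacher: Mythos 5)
There is a genuine gap at the central step of your argument, and the route you take is also not the paper's. The paper gives no direct proof: it defers to \cite{kt}, where the argument uses only the ordinary Pr\"ufer angles $\theta_j$ of the three solutions, the crossing lemma (Lemma~\ref{lem:delinc} here), which says that under the sign hypotheses each difference $\Delta_{j,k}=\theta_j-\theta_k$ can cross multiples of $\pi$ only from below, and the additive identity $\Delta_{2,0}=\Delta_{2,1}+\Delta_{1,0}$: normalizing $\Delta_{1,0}(c)=0$ and $\Delta_{2,1}(c)\in[0,\pi)$, one gets $\Delta_{1,0}(d)\ge\pi$ (here ``does not vanish identically'' enters) and $\Delta_{2,1}(d)\ge 0$, hence $\Delta_{2,0}(d)\ge\pi>\Delta_{2,0}(c)$, and the monotone crossing of $\pi$ by $\Delta_{2,0}$ is the required sign flip. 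No differential equation for a Wronskian angle and no comparison of rates enters at all.

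Your proposal instead runs through the effective (Rofe--Beketov) angle of Section~\ref{sec:roc}, and its key assertion --- ``because $\lam_2-\phi_2\ge\lam_1-\phi_1$, the angle $\psi_{02}$ increases at least as fast as $\psi_{01}$ on $(c,d)$'' --- is unjustified and false as a pointwise statement. By Lemma~\ref{lem:lemusualprue} one has $\psi_{0j}'(x)=\spr{w_j(x)}{A_j(x)\,w_j(x)}$ with $A_j=(\lam_j-\phi_j)-(\lam_0-\phi_0)\ge 0$ and $w_j=u_0\cos(\psi_{0j})-v_0\sin(\psi_{0j})$, where $v_0$ must be a second solution of the $\tau_0$ equation with $W(u_0,v_0)=1$ (your displayed formula, with the form evaluated at $u_0$ alone, is correct only at zeros of the Wronskian, where $w_j=\pm u_0$; the comparison step needs it everywhere else). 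Since $w_1(x)\neq w_2(x)$, the matrix inequality $A_2\ge A_1$ yields no pointwise inequality between $\psi_{02}'(x)$ and $\psi_{01}'(x)$: for instance $w_2(x)$ may lie in the kernel of a rank-one $A_2(x)$ while $\spr{w_1(x)}{A_1(x)w_1(x)}>0$. The repair is the comparison theorem for scalar differential inequalities: the right-hand sides compared at the \emph{same} angle do satisfy $F_2(x,\theta)\ge F_1(x,\theta)$ for every $\theta$, so from $\psi_{01}(c)=0\le\psi_{02}(c)<\pi$ one concludes $\psi_{02}\ge\psi_{01}$ on $[c,d]$ and hence $\psi_{02}(d)\ge\psi_{01}(d)\ge\pi$; it is this, not a derivative comparison, that your argument needs. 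Even then two points remain: (a) you only obtain $\psi_{02}(d)\ge\pi$, and when equality holds the crossing of $\pi$ sits at $d$, not in the open interval $(c,d)$ --- the degenerate configuration $\tau_2=\tau_1$, $\lam_2=\lam_1$, $u_2=u_1$ with $c,d$ consecutive zeros shows this case is real, and your closing remark about ``nonstrict monotonicity'' does not dispose of it, so the strictness bookkeeping must be made explicit; (b) for the second assertion the two Wronskian angles must be formed in one common frame (two independent solutions of one and the same equation), since otherwise their ODEs are not comparable, so ``interchanging roles'' is not merely cosmetic.
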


\begin{theorem}[Triangle inequality for Wronskians]\label{thm:wtriang}
Suppose $u_j$, $j=0,1,2$ are given real-valued non-vanishing vector functions. Then
\be 
\ul{\#}(u_0,u_1) + \ul{\#}(u_1,u_2) - 1 \leq \ul{\#}(u_0,u_2) \leq
\ul{\#}(u_0,u_1) + \ul{\#}(u_1,u_2) + 1,
\ee
and similarly for $\ul{\#}$ replaced by $\ol{\#}$.
\end{theorem}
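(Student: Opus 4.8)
The plan is to reduce the triangle inequality to an additivity/subadditivity statement for the weighted sign-flip counts $\#_{(c,d)}$ on a fixed compact subinterval $[c,d]\subset I$, and then pass to the limit $c\downarrow a$, $d\uparrow b$ using the definitions of $\ul{\#}$ and $\ol{\#}$ as $\liminf$ and $\limsup$. Recall that each count is expressed through a Prüfer angle: to each Wronskian $W(u_i,u_j)$ we associate a continuous angle $\psi_{ij}(x)$, and $\#_{(c,d)}(u_i,u_j)$ is built from $\ceil{\Delta_{ij}(d)/\pi}$ and $\floor{\Delta_{ij}(c)/\pi}$, where $\Delta_{ij}$ is the total increment of $\psi_{ij}$ across the interval. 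So first I would pin down the precise relationship between the three Prüfer angles $\psi_{01}$, $\psi_{12}$, and $\psi_{02}$ at a single point $x$.

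The key algebraic step is a pointwise identity relating the three angles. Since all three Wronskians are built from the same solutions $u_0,u_1,u_2$ via the same bilinear construction, I expect that the angle $\psi_{02}(x)$ differs from $\psi_{01}(x)+\psi_{12}(x)$ (or an analogous combination) by a quantity that stays within a bounded window — intuitively, a composition of two rotations is the sum of the rotations up to the ambiguity in how one lifts angles to $\R$. Concretely, I would show that at each $x$ the difference between $\psi_{02}(x)$ and the appropriate sum of $\psi_{01}(x)$ and $\psi_{12}(x)$ lies in an interval of length at most $\pi$ (or that the corresponding points on the circle are related by an inequality of the form $\lfloor\alpha+\beta\rfloor \ge \lfloor\alpha\rfloor+\lfloor\beta\rfloor$ versus $\lceil\cdot\rceil$). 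This is the standard mechanism by which composing oscillation counts produces the $\pm 1$ error terms.

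Combining the pointwise angle relation with the monotonicity of floor and ceiling then yields, for fixed $c<d$, an inequality of the shape
\be
\#_{(c,d)}(u_0,u_1) + \#_{(c,d)}(u_1,u_2) - 1 \le \#_{(c,d)}(u_0,u_2) \le \#_{(c,d)}(u_0,u_1) + \#_{(c,d)}(u_1,u_2) + 1,
\ee
the $\pm 1$ coming exactly from the jumps between $\floor{\cdot}$ and $\ceil{\cdot}$ at the two endpoints. Taking $\liminf$ (resp.\ $\limsup$) as $d\uparrow b$, $c\downarrow a$, and using the elementary facts that $\liminf(f+g)\ge \liminf f + \liminf g$ and $\limsup(f+g)\le \limsup f + \limsup g$ together with $\liminf(-g)=-\limsup g$, converts the two-sided estimate into the claimed bounds for $\ul{\#}$ and, symmetrically, for $\ol{\#}$.

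The main obstacle I anticipate is the bookkeeping in the pointwise angle identity: the three Prüfer angles are each defined only up to multiples of $2\pi$ and are tied together by continuity, so I must fix their lifts consistently and verify that the sum $\psi_{01}+\psi_{12}$ really tracks $\psi_{02}$ with the claimed bounded error, rather than drifting by a growing multiple of $\pi$ over a long interval. Once that single-point estimate is nailed down uniformly in $x$, the rest is just the floor/ceiling arithmetic at the endpoints and the routine limit passage; the authors' remark that "both results follow as in \cite{kt}" suggests the Sturm--Liouville argument transfers essentially verbatim, the only Dirac-specific input being the explicit form of the Wronskian Prüfer variables introduced above.
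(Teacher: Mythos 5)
Your overall skeleton (a two-sided bound for $\#_{(c,d)}$ on fixed intervals followed by a limit passage) is indeed the structure of the argument in Kr\"uger--Teschl that the paper cites, but the step you defer --- the ``pointwise angle identity'' --- is the entire content of the proof, and the form in which you propose to establish it would not give the stated constants. The counts $\#_{(c,d)}(u_i,u_j)$ are defined through $\Delta_{j,i}(x)=\theta_{u_j}(x)-\theta_{u_i}(x)$, the difference of the \emph{individual} Pr\"ufer angles of the two functions (cf.\ \eqref{wpruefer}); any other angle attached to the Wronskian merely has to reproduce these counts (Definition~\ref{def:eqpa}). With this choice there is nothing approximate to verify: one has the exact identity $\Delta_{2,0}(x)=\Delta_{2,1}(x)+\Delta_{1,0}(x)$ for every $x$, because all three quantities are differences of the same three continuous lifts $\theta_{u_0},\theta_{u_1},\theta_{u_2}$, so no drift and no lift-consistency issue can arise. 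The $\pm 1$ then comes solely from the elementary estimates $\ceil{x}+\ceil{y}-1\le\ceil{x+y}\le\ceil{x}+\ceil{y}$ and $\floor{x}+\floor{y}\le\floor{x+y}\le\floor{x}+\floor{y}+1$ applied at $d$ and at $c$. By contrast, your proposed substitute --- that $\psi_{02}-(\psi_{01}+\psi_{12})$ stays within a window of length $\pi$ --- is left unproven (you flag it yourself as the main obstacle), and even if proven it is too weak: an additive error of size up to $\pi$ pushed through the floor/ceiling estimates produces $\pm 2$ or worse, not $\pm 1$. The missing idea is precisely that the Pr\"ufer angle of the Wronskian should be \emph{chosen} to be a difference of solution angles, which makes the additivity exact.

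Separately, your limit passage delivers only half of the asserted inequalities. Superadditivity of $\liminf$ gives $\ul{\#}(u_0,u_2)\ge \ul{\#}(u_0,u_1)+\ul{\#}(u_1,u_2)-1$, and subadditivity of $\limsup$ gives $\ol{\#}(u_0,u_2)\le \ol{\#}(u_0,u_1)+\ol{\#}(u_1,u_2)+1$; but the upper bound for $\ul{\#}$ and the lower bound for $\ol{\#}$ would require $\liminf(f+g)\le \liminf f+\liminf g$, which is false in general, and $\liminf(-g)=-\limsup g$ does not repair this because the counts are not antisymmetric: $\#_{(c,d)}(u_2,u_1)$ differs from $-\#_{(c,d)}(u_1,u_2)$ by as much as $2$ (note the paper's observation that $\#(u,u)=-1$, not $0$). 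What the fixed-interval bound actually yields in the limit are mixed inequalities such as $\ul{\#}(u_0,u_2)\le \ul{\#}(u_0,u_1)+\ol{\#}(u_1,u_2)+1$. Indeed, if the two angle differences oscillate out of phase near an endpoint (one hovering just below a multiple of $\pi$, the other just above, with their sum staying strictly between multiples), the all-$\liminf$ upper bound can fail for general non-vanishing functions; it collapses to the stated form exactly when one of the quantities exists in the sense of Definition~\ref{def:wsf}, which is the situation in the paper's applications (cf.\ Lemma~\ref{lem:estproj}). A complete write-up must address this point explicitly rather than appeal to sub/superadditivity of $\liminf$ and $\limsup$ alone.
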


\begin{definition}\label{def:relosc}
We call $\tau_1$ relatively nonoscillatory with respect to
$\tau_0$, if the quantities $\underline{\#}(u_0, u_1)$ and
$\overline{\#}(u_0, u_1)$ are finite for all solutions
$\tau_j u_j = 0$, $j = 0,1$.

We call $\tau_1$ relatively oscillatory with respect to
$\tau_0$, if one of the quantities $\underline{\#}(u_0, u_1)$ or
$\overline{\#}(u_0, u_1)$ is infinite for some solutions
$\tau_j u_j = 0$, $j = 0,1$.
\end{definition}

Note that this definition is in fact independent of the solutions
chosen as a straightforward application of our triangle inequality
(cf.\ Theorem~\ref{thm:wtriang}) shows.

\begin{corollary}\label{cor:indisol}
Let $\tau_j u_j = \tau_j v_j = 0$, $j=0,1$. Then
\be
|\underline{\#}(u_0, u_1) - \underline{\#}(v_0, v_1)| \le 4,\quad
|\overline{\#}(u_0, u_1) - \overline{\#}(v_0, v_1)| \le 4.
\ee
\end{corollary}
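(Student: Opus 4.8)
The plan is to reduce this stability statement to the triangle inequality for Wronskians (Theorem~\ref{thm:wtriang}), which already encodes how the weighted sign-flip count changes when one of the two solutions is replaced by another solution of the \emph{same} equation. The crucial observation is that if $u_0$ and $v_0$ both satisfy $\tau_0 u = 0$, then their Wronskian $W_x(u_0,v_0)$ is constant in $x$, since differentiating $W_x(u_0,v_0)$ and using $\tau_0 u_0 = \tau_0 v_0 = 0$ produces a term proportional to $\spr{u_0}{(\phi_0-\phi_0)v_0}=0$. Hence $u_0$ and $v_0$ are two solutions of the same first-order system whose Wronskian never vanishes (unless they are linearly dependent), so $\ul{\#}(u_0,v_0)=\ol{\#}(u_0,v_0)=0$ — there are no sign flips of a nowhere-vanishing Wronskian. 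The same applies to the pair $u_1,v_1$ for $\tau_1$.

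With this in hand, I would first apply the triangle inequality to interpolate from $(u_0,u_1)$ through $(v_0,u_1)$ and then through $(v_0,v_1)$. Concretely, using the upper and lower bounds of Theorem~\ref{thm:wtriang} twice:
\begin{align}\nn
\ul{\#}(u_0,u_1) &\le \ul{\#}(u_0,v_0) + \ul{\#}(v_0,u_1) + 1 = \ul{\#}(v_0,u_1)+1,\\ \nn
\ul{\#}(v_0,u_1) &\le \ul{\#}(v_0,v_1) + \ul{\#}(v_1,u_1) + 1 = \ul{\#}(v_0,v_1)+1,
\end{align}
where in each line I used that a Wronskian of two solutions of the same equation contributes $0$. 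Chaining these gives $\ul{\#}(u_0,u_1) \le \ul{\#}(v_0,v_1)+2$, and the reverse chain (running the interpolation in the opposite direction, or applying the lower bounds of the triangle inequality) yields $\ul{\#}(v_0,v_1)\le \ul{\#}(u_0,u_1)+2$. Together these give $|\ul{\#}(u_0,u_1)-\ul{\#}(v_0,v_1)|\le 2$, and identically for $\ol{\#}$.

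This in fact produces the sharper bound $2$ rather than the claimed $4$, so the main subtlety is not in the estimation but in two technical points. First, the triangle inequality as stated requires the intermediate function to satisfy the chain of inequalities $\lam_i-\phi_i$ monotone, but here all three equations are the \emph{same} (either all $\tau_0$ or all $\tau_1$), so the hypothesis holds trivially with equality; I would note that the degenerate case where $v_0$ is a multiple of $u_0$ (so the Wronskian vanishes identically) must be excluded or handled separately, but linear independence of a chosen fundamental pair resolves this. Second, one must verify that the triangle inequality is genuinely applicable to the mixed triple $(u_0,v_0,u_1)$, whose three functions solve two different equations $\tau_0$ and $\tau_1$ — here the relevant ordering $\lam-\phi$ need not be monotone, and this is the real obstacle: the clean interpolation only works when the inserted pair shares an equation. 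If the triangle inequality demands monotonicity across the whole triple, then I would instead insert \emph{both} intermediate steps so that each application of Theorem~\ref{thm:wtriang} involves only a single sign-flip-free Wronskian between like solutions, which is precisely why a conservative bound of $4$ (two applications, each costing $2$ through both an upper and a lower comparison, or allowing for the non-monotone orderings that force using both inequalities) is what the authors state. The honest plan, therefore, is to apply the triangle inequality once to replace $u_0$ by $v_0$ and once to replace $u_1$ by $v_1$, accounting for a possible $\pm 1$ and the direction of the Wronskian contribution at each step, which accumulates to the stated constant $4$.
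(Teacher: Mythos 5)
Your argument is correct and is precisely the paper's intended proof: the paper gives no details beyond calling Corollary~\ref{cor:indisol} a ``straightforward application'' of Theorem~\ref{thm:wtriang}, and your two-step interpolation $(u_0,u_1)\to(v_0,u_1)\to(v_0,v_1)$, using that the Wronskian of two solutions of the \emph{same} equation is constant, is exactly that application. Two clarifications are in order, however. First, your worry that Theorem~\ref{thm:wtriang} might require a monotone ordering of the $\lam_j-\phi_j$ is a misreading: that hypothesis belongs to the comparison theorem (Theorem~\ref{thm:scw}), whereas the triangle inequality is stated for \emph{arbitrary} real-valued non-vanishing vector functions; hence your mixed triples $(u_0,v_0,u_1)$ and $(v_0,v_1,u_1)$ are admissible as they stand, and your final hedging paragraph is moot. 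Second, the constant $4$ has nothing to do with non-monotone orderings; it simply absorbs the degenerate case you set aside: if $v_0$ is a nonzero multiple of $u_0$, then $W(u_0,v_0)\equiv 0$ and the weighted count equals $-1$, not $0$ (recall the paper's remark that $\#(u,u)=-1$), so the uniform estimates $|\ul{\#}(u_0,v_0)|\le 1$ and $|\ul{\#}(v_1,u_1)|\le 1$, combined with the two $\pm 1$ errors from the triangle inequality, give the bound $4$ in all cases. Your sharper bound $2$ is in fact valid in general, but to cover multiples you should observe that the weighted count is invariant under replacing a solution by a nonzero multiple of itself (the Pr\"ufer angle difference merely shifts by a multiple of $\pi$, which cancels in $\ceil{\cdot}-\floor{\cdot}$), rather than appeal to ``linear independence of a chosen fundamental pair'' --- the solutions $v_j$ are given, not chosen.
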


The bounds can be improved using our comparison theorem for
Wronskians to be $\leq 2$ in the case of perturbations of definite sign.

To demonstrate the usefulness of Definition~\ref{def:relosc},
we now establish its connection with the spectra
of self-adjoint operators associated with $\tau_j$, $j=0,1$.

\begin{theorem} \label{thm:rosc}
Let $H_j$ be self-adjoint Dirac operators associated with $\tau_j$, $j=0,1$. Then
\begin{enumerate}
\item
$\tau_0-\lam_0$ is relatively nonoscillatory with respect to $\tau_0-\lam_1$
if and only if $\dim\Ran P_{(\lam_0,\lam_1)}(H_0)<\infty$.
\item
Suppose $\dim\Ran P_{(\lam_0,\lam_1)}(H_0)<\infty$ and
$\tau_1-\lam$ is relatively nonoscillatory with respect to $\tau_0-\lam$ for one
$\lam \in [\lam_0,\lam_1]$.  Then it is relatively nonoscillatory for
all $\lam\in[\lam_0,\lam_1]$ if and only if $\dim\Ran P_{(\lam_0,\lam_1)}(H_1)<\infty$.
\end{enumerate}
\end{theorem}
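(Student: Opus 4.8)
The two parts are of quite different character, and I would treat them separately. Part~(i) is the diagonal case $\tau_1=\tau_0$ and is essentially the finiteness form of the renormalized oscillation theorem for a single Dirac operator, while part~(ii) is a bookkeeping argument built on the triangle inequality (Theorem~\ref{thm:wtriang}) together with part~(i). For part~(i) I would first note that the perturbation governing $\#(u_0,u_1)$ is $(\phi_0-\lam_0)-(\phi_0-\lam_1)=\lam_1-\lam_0$, a constant of definite sign, so $\#(u_0,u_1)$ exists by Lemma~\ref{lem:delinc}; moreover, by Corollary~\ref{cor:indisol} its finiteness is independent of the chosen solutions, so I may work with the Weyl solutions of $H_0$ at $\lam_0$ and $\lam_1$. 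For this choice the renormalized oscillation theorem for a single Dirac operator (Teschl~\cite{toscd}) identifies the weighted number of sign flips of the Wronskian of these two solutions with $\dim\Ran P_{(\lam_0,\lam_1)}(H_0)$ whenever the latter is finite, giving one implication. For the converse I would argue by contraposition: if $\dim\Ran P_{(\lam_0,\lam_1)}(H_0)=\infty$, then $(\lam_0,\lam_1)$ either meets $\sig_{ess}(H_0)$ or contains infinitely many eigenvalues accumulating inside the interval, and in either case the oscillation-theoretic input of \cite{toscd}, via repeated application of the comparison Theorem~\ref{thm:scw}, forces infinitely many sign flips of the Wronskian, so $\tau_0-\lam_0$ is relatively oscillatory with respect to $\tau_0-\lam_1$.

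For part~(ii) I would route everything through Theorem~\ref{thm:wtriang}. Write $u_j(\lam)$ for a real solution of $\tau_j u=\lam u$. To establish relative nonoscillation at an arbitrary $\lam\in[\lam_0,\lam_1]$, fix the value $\lam_*$ at which it is assumed and apply the triangle inequality twice along the chain $u_0(\lam)\to u_0(\lam_*)\to u_1(\lam_*)\to u_1(\lam)$, obtaining
\[
\overline{\#}(u_0(\lam),u_1(\lam)) \le \overline{\#}(u_0(\lam),u_0(\lam_*)) + \overline{\#}(u_0(\lam_*),u_1(\lam_*)) + \overline{\#}(u_1(\lam_*),u_1(\lam)) + 2
\]
together with the corresponding two-sided lower bound for $\underline{\#}$. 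The two outer terms are diagonal counts over subintervals of $[\lam_0,\lam_1]$ for $H_0$ and $H_1$, hence finite by part~(i) and the finiteness of $P_{(\lam_0,\lam_1)}(H_0)$ and $P_{(\lam_0,\lam_1)}(H_1)$, while the middle term is finite by hypothesis; thus $\#(u_0(\lam),u_1(\lam))$ is finite for every such $\lam$, which is relative nonoscillation throughout $[\lam_0,\lam_1]$. Conversely, if $\tau_1-\lam$ is relatively nonoscillatory for all $\lam\in[\lam_0,\lam_1]$, then in particular at the endpoints, and the chain $u_1(\lam_0)\to u_0(\lam_0)\to u_0(\lam_1)\to u_1(\lam_1)$ bounds the diagonal $H_1$-count $\#(u_1(\lam_0),u_1(\lam_1))$ by the two relative counts at $\lam_0,\lam_1$ plus the diagonal $H_0$-count, all finite; part~(i) then yields $\dim\Ran P_{(\lam_0,\lam_1)}(H_1)<\infty$.

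I expect the entire analytic difficulty to sit in part~(i), and specifically in its converse direction: ruling out that an infinite-dimensional spectral projection could coexist with only finitely many weighted sign flips of the Wronskian is where one genuinely needs the comparison theorem and the oscillation-theoretic machinery of \cite{toscd}. By contrast, part~(ii) is purely formal once part~(i) is in hand; the only point requiring a moment's care is that every count appearing in the two chains is taken over a subinterval of $[\lam_0,\lam_1]$, so that the finitely many eigenvalues associated with the fixed endpoints cannot destroy finiteness.
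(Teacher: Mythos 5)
Your proposal is correct and follows essentially the same route as the paper, which disposes of part (i) by citing Teschl's renormalized oscillation theorem for Dirac operators \cite[Thm.~4.5]{toscd} and of part (ii) by the triangle-inequality bookkeeping of \cite{kt} --- exactly your two chains through $u_0(\lam_*),u_1(\lam_*)$ and through $u_0(\lam_0),u_0(\lam_1)$. (One immaterial quibble, since that direction is contained in the cited theorem: in the converse of (i), infinitely many eigenvalues in an interval free of essential spectrum would have to accumulate at an endpoint $\lam_0$ or $\lam_1$, forcing that endpoint into $\sig_{ess}(H_0)$, rather than accumulating ``inside'' the interval as you state.)
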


\begin{proof}
Item (i) is \cite[Thm.~4.5]{toscd} and item (ii) follows as in \cite{kt}.
\end{proof}

For a practical application of this theorem one needs of course criteria
when  $\tau_1-\lam$ is relatively nonoscillatory with respect to $\tau_0-\lam$
for $\lam$ inside an essential spectral gap.

\begin{lemma}\label{lem:nonoscingap}
Let $\lim_{x\to a} (\phi_0(x) - \phi_1(x)) = 0$ if $a$ is singular, and
similarly, $\lim_{x\to b} (\phi_0(x) - \phi_1(x)) = 0$ if $b$ is singular.
Then $\sig_{ess}(H_0)=\sig_{ess}(H_1)$ and
$\tau_1 - \lam$ is relatively nonoscillatory with respect
to $\tau_0 - \lam$ for $\lam \in \R \backslash\sigma_{ess}(H_0)$.
\end{lemma}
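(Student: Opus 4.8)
The plan is to prove Lemma~\ref{lem:nonoscingap} in two stages: first the equality of essential spectra, then the relative nonoscillation statement for $\lam$ in the essential gap. For the first stage, the hypothesis $\lim_{x\to a}\Delta\phi(x)=0$ (and similarly at $b$) at each singular endpoint means that the perturbation $\Delta\phi = \phi_1-\phi_0$ is ``small at infinity'' in the relevant sense. The standard strategy is to show that the resolvent difference $(H_1-z)^{-1}-(H_0-z)^{-1}$ is compact for some (hence all) $z\in\rho(H_0)\cap\rho(H_1)$, so that Weyl's theorem on the invariance of the essential spectrum gives $\sig_{ess}(H_0)=\sig_{ess}(H_1)$. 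To localize the perturbation I would split $I$ into a compact middle piece and two collar neighborhoods of the singular endpoints; on the collars $\Delta\phi$ is uniformly small by hypothesis, while on the compact piece the embedding into $L^2$ furnishes compactness. A clean way to package this is via the second resolvent identity, writing the difference as $(H_1-z)^{-1}\Delta\phi(H_0-z)^{-1}$ and approximating $\Delta\phi$ by compactly supported cutoffs $\chi_R\Delta\phi$; the tail $(1-\chi_R)\Delta\phi$ contributes an operator of norm $O(\sup_{|x|>R}|\Delta\phi(x)|)\to 0$, exhibiting the resolvent difference as a norm limit of compact operators.

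For the second stage I want to show that for $\lam\in\R\setminus\sig_{ess}(H_0)$ the quantity $\#_{(c,d)}(u_0,u_1)$ stays bounded as $c\downarrow a$, $d\uparrow b$, where $\tau_j u_j=\lam u_j$. Since $\lam$ lies in a gap of the (common) essential spectrum, the Weyl solutions $u_{0,-}(\lam)$ and $u_{0,+}(\lam)$ of $\tau_0 u=\lam u$ exist and are exponentially decaying (or at least $L^2$) near the respective endpoints, and similarly for $\tau_1$. By Corollary~\ref{cor:indisol} the finiteness of $\ul\#$ and $\ol\#$ is independent of which solutions we pick (up to the additive constant $4$), so I am free to work with the Weyl solutions near each endpoint. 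The key point is that near $a$ the pair of decaying Weyl solutions $u_{0,-}$, $u_{1,-}$ have Wronskian Pr\"ufer angle that converges: because $\Delta\phi\to 0$ there, the two differential systems become asymptotically identical, so the difference of Pr\"ufer angles $\psi(x)$ cannot wind around indefinitely, and the weighted count of sign flips of $W(u_{0,-},u_{1,-})$ on a collar $(a,c)$ is finite. The same holds at $b$, and on the compact middle piece the count is trivially finite.

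The main obstacle will be the second stage, specifically establishing that the Wronskian Pr\"ufer angle has a limit (equivalently, that $W(u_{0,-}(\lam),u_{1,-}(\lam))$ has only finitely many weighted sign flips) near the singular endpoint. Here the decay $\Delta\phi(x)\to 0$ must be converted into control of $\psi(x)$. I would derive the first-order differential equation governing $\psi$ — analogous to the classical Pr\"ufer equation — whose right-hand side is driven by $\spr{u_0}{\Delta\phi\,u_0}$ and the relevant trigonometric factors of $\psi$ itself; since $u_{0,-}$ and $u_{1,-}$ are both genuine Weyl solutions for $\lam$ outside $\sig_{ess}$, their product decays and $\Delta\phi\to 0$, so the forcing term is integrable near $a$, forcing $\psi$ to converge. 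Concretely, the contribution to $\psi'$ is of the form $\tfrac{1}{R^2}\spr{u_0}{\Delta\phi\,u_0}$ weighted by bounded trigonometric quantities, and both the smallness of $\Delta\phi$ and the $L^2$ (indeed exponential) control of the Weyl solution near the endpoint combine to give $\int^a |\psi'(x)|\,dx<\infty$. Once convergence of $\psi$ at both endpoints is secured, boundedness of $\#_{(c,d)}$ and hence relative nonoscillation is immediate from Definition~\ref{def:wsf} together with the triangle inequality of Theorem~\ref{thm:wtriang} to pass from the Weyl solutions back to arbitrary solutions.
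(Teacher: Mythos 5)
Your first stage is essentially the paper's own argument: decompose $\Delta\phi$ into a compactly supported part plus a uniformly small tail, conclude that the resolvent difference is a norm limit of compact operators, and invoke Weyl's theorem; this part is fine. The gap is in your second stage. The only Pr\"ufer angle for the Wronskian whose derivative is driven purely by $\Delta\phi$ is the Rofe--Beketov angle \eqref{def:psi}, and its equation \eqref{eq:diff1wronski} reads $\psi' = -\spr{w}{\Delta\phi\, w}$ with $w = u_0\cos(\psi) - v_0\sin(\psi)$, where $v_0$ is a \emph{second}, linearly independent solution of $(\tau_0-\lam)u=0$. Near a singular endpoint $v_0$ is not a Weyl solution and generically grows, so the forcing term is not of the form ``decaying solution times bounded trigonometric factors'': whenever $\sin(\psi)$ is not small, it has size of order $|v_0|^2\,|\Delta\phi|$. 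Equivalently, writing $\psi'=-\spr{u_1}{\Delta\phi\,u_1}/R^2$, the numerator is indeed integrable near $a$, but $R^2=W_x(u_0,u_1)^2+W_x(v_0,u_1)^2$ admits no positive lower bound: $W_x(u_0,u_1)\to0$ for two pointwise decaying Weyl solutions, while $W_x(v_0,u_1)$, whose derivative $\spr{v_0}{\Delta\phi\,u_1}$ is only $O(|\Delta\phi|)$, can return to zero infinitely often, because the hypothesis gives $\Delta\phi(x)\to0$ with \emph{no rate} and in particular no integrability. Using the naive angle difference $\Delta_{1,0}=\theta_1-\theta_0$ instead does not help: its derivative is not proportional to $\Delta\phi$ at all (this is exactly why Section~\ref{sec:roc} calls that equation ``not very effective''). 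Note also that your appeal to exponential decay of Weyl solutions is unfounded, since the lemma assumes nothing about $\phi_0$ near the singular endpoints; and the property you aim for, convergence of $\psi$, is stronger than what is true under these hypotheses, which is only boundedness.

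The paper's proof avoids integrability altogether and uses only what the hypothesis actually supplies, namely sup-norm smallness on collars. With $\delta$ the distance from $\lam$ to $\sig_{ess}(H_0)$, choose $c,d$ so that $|\Delta\phi|\le\delta/2$ outside $(c,d)$; then $\phi_0-\lam_+\le\phi_1-\lam\le\phi_0-\lam_-$ on each collar, where $\lam_\pm=\lam\pm\delta/2$. Theorem~\ref{thm:rosc}~(i) gives $\#_I(u_0(\lam_-),u_0(\lam_+))<\infty$ on a collar $I$, the comparison theorem for Wronskians (Theorem~\ref{thm:scw}) converts this into $\#_I(u_0(\lam_\pm),u_1(\lam))<\infty$, and the triangle inequality (Theorem~\ref{thm:wtriang}) then bounds $\ol{\#}_I(u_0(\lam),u_1(\lam))$ and $\ul{\#}_I(u_0(\lam),u_1(\lam))$. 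This comparison--interlacing route is precisely the mechanism that turns pointwise smallness of $\Delta\phi$ into boundedness of the relative Pr\"ufer angle. To salvage your ODE argument you would need an additional quantitative assumption, such as integrability of $|v_0|^2\,|\Delta\phi|$ near the singular endpoints, which is strictly stronger than the hypothesis of the lemma.
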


\begin{proof}
Since $\tau_1$ can be written as $\tau_1 = \tau_0 + \ti{\phi}_0 + \ti{\phi}_1$, where
$\ti{\phi}_0$ has compact support near singular endpoints and $|\ti{\phi}_1|<\eps$,
for arbitrarily small $\eps>0$, we infer that $R_{H_1}(z) - R_{H_0}(z)$ is the norm
limit of compact operators. Thus $R_{H_1}(z) - R_{H_0}(z)$ is compact and hence
$\sig_{ess}(H_0)=\sig_{ess}(H_1)$.

Let $\delta>0$ be the distance of $\lam$ to the essential spectrum and choose
$a < c < d < b$, such that
$$
|\phi_1(x) - \phi_0(x)| \le \delta/2,\qquad x\not\in(c,d).
$$
Clearly $\#_{(c,d)} (u_0,u_1) < \infty$, since both operators are regular
on $(c,d)$. Moreover, observe that
$$
\phi_0 - \lam_+ \leq \phi_1- \lam \leq \phi_0- \lam_-, \qquad
\lam_\pm=\lam\pm\delta/2,
$$
on $I = (a,c)$ or $I =(d,b)$. Then Theorem~\ref{thm:rosc}~(i) implies
$\#_I(u_0(\lam_-), u_0(\lam_+)) < \infty$ and invoking Theorem~\ref{thm:scw}
shows $\#_I(u_0(\lam_\pm), u_1(\lam)) < \infty$.
From Theorem~\ref{thm:wtriang} and \ref{thm:rosc}~(i) we infer
$$
\ol{\#}_I(u_0(\lam),u_1(\lam)) < \#_I(u_0(\lam),u_0(\lam_+)) +
\#_I(u_0(\lam_+), u_1(\lam)) + 1 < \infty,
$$
and similarly for $\ul{\#}_I(u_0(\lam),u_1(\lam))$.
This shows that $\tau_1-\lam$ is relatively nonoscillatory with respect to $\tau_0$.
\end{proof}

Our next task is to reveal the precise relation between the number of
weighted sign flips and the spectra of $H_1$ and $H_0$. The special
case $H_0=H_1$ is covered by

\begin{theorem}[{\cite[Thm.~4.5]{toscd}}] \label{thm:gst}
Let $H_0$ be a self-adjoint operator associated with $\tau_0$ and suppose
$[\lam_0,\lam_1]\cap\sig_{ess}(H_0)=\emptyset$.
Then
\be
\dim\Ran P_{(\lam_0, \lam_1)} (H_0) = \#(\psi_{0,\mp}(\lam_0),\psi_{0,\pm}(\lam_1)).
\ee
\end{theorem}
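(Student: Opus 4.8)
The plan is to establish this as a renormalized oscillation theorem: the eigenvalues of $H_0$ in $(\lam_0,\lam_1)$ should be counted, with multiplicity, by the weighted zeros in $x$ of the Wronskian $W_x(\psi_{0,\mp}(\lam_0),\psi_{0,\pm}(\lam_1))$ of the two Weyl solutions taken at the two ends of the spectral interval but at the two different energies $\lam_0$ and $\lam_1$. Since $[\lam_0,\lam_1]\cap\sig_{ess}(H_0)=\emptyset$, the Weyl solutions $\psi_{0,\mp}(\lam)$ and $\psi_{0,\pm}(\lam)$ exist throughout a complex neighborhood of $[\lam_0,\lam_1]$, may be chosen real-valued and analytic in $\lam$, and $\tau_0-\lam_0$ is relatively nonoscillatory with respect to $\tau_0-\lam_1$ by Theorem~\ref{thm:rosc}~(i); in particular the limits in Definition~\ref{def:wsf} converge and $\#(\psi_{0,\mp}(\lam_0),\psi_{0,\pm}(\lam_1))$ is finite.

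First I would record two elementary facts. A point $E\in(\lam_0,\lam_1)$ is an eigenvalue of $H_0$ exactly when the two Weyl solutions at $E$ are linearly dependent, i.e.\ $W_x(\psi_{0,-}(E),\psi_{0,+}(E))\equiv 0$; since the Wronskian of two solutions of $\tau_0 u=Eu$ is $x$-independent this happens as soon as it vanishes once, and separated boundary conditions force such an eigenvalue to be simple. The analytic input is the Lagrange identity, which writes $\frac{d}{dx}W_x(u,v)$ as a fixed nonzero multiple of $(\lam-\mu)\spr{u(x)}{v(x)}$ for solutions $\tau_0 u=\lam u$, $\tau_0 v=\mu v$. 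Applied with $\lam=\lam_0<\lam_1=\mu$, and noting that at a zero of the Wronskian the parallel vectors $u(x),v(x)$ give $\spr{u(x)}{v(x)}\neq 0$ of a fixed sign, this shows that $\frac{d}{dx}W_x$ is nonvanishing of a fixed sign at each such zero; hence the Pr\"ufer angle $\psi$ crosses every multiple of $\pi$ transversally with one and the same orientation, so that all sign flips agree in sign and $\#$ reduces to the honest number of interior zeros.

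The core is a comparison of two solution Pr\"ufer angles. For a solution $\tau_0 u=zu$ introduce the usual (solution) Pr\"ufer angle $\theta(z,x)$; the diagonal oscillation theory of \cite{toscd} together with the energy-differentiated Lagrange identity gives that $\theta(z,x)$ is strictly monotone in $z$ at fixed $x$, the direction being fixed by the sign of $|u|^2>0$. Now a zero of $W_x(\psi_{0,-}(\lam_0),\psi_{0,+}(\lam_1))$ occurs precisely where $\theta_-(\lam_0,\cdot)$ and $\theta_+(\lam_1,\cdot)$ coincide modulo $\pi$, whereas an eigenvalue $E$ corresponds to a coincidence modulo $\pi$ of $\theta_-(E,\cdot)$ and $\theta_+(E,\cdot)$. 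Sliding the energy of the right Weyl solution continuously from $\lam_0$ to $\lam_1$, and invoking the strict monotonicity in the energy (and the comparison Theorem~\ref{thm:scw} to prevent interior coincidences from being created or destroyed except by passing an eigenvalue), one shows that the net number of coincidences picked up equals the number of $E\in(\lam_0,\lam_1)$ carrying a same-energy coincidence, each counted once by simplicity. This identifies the interior count with $\dim\Ran P_{(\lam_0,\lam_1)}(H_0)$.

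The main obstacle is the bookkeeping at the endpoints, and most delicately at a singular endpoint. One must verify that the limits $c\downarrow a$, $d\uparrow b$ in Definition~\ref{def:wsf} converge and that, as the energy is slid, no coincidence escapes to or emerges from an endpoint in a manner not tied to an eigenvalue of $H_0$. This is controlled by the square integrability of the Weyl solutions and by relative nonoscillation in the gap (Lemma~\ref{lem:nonoscingap} and Theorem~\ref{thm:rosc}~(i)), which bound the total number of interior zeros uniformly along the slide; the floor and ceiling corrections built into $\#_{(c,d)}$ are then exactly what absorb a possible half-bound-state contribution at an endpoint, so that the interior weighted sign-flip count matches $\dim\Ran P_{(\lam_0,\lam_1)}(H_0)$ precisely rather than up to a bounded error. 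Finally, the triangle inequality of Theorem~\ref{thm:wtriang} shows independence of the precise normalization of the Weyl solutions and lets one glue the delicate near-endpoint analysis to the regular middle interval, where the count is classical.
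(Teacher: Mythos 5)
The paper offers no proof to compare against here: Theorem~\ref{thm:gst} is imported verbatim from \cite[Thm.~4.5]{toscd}, so your sketch can only be measured against the strategy of that source (Pr\"ufer angles, equi-oriented crossings, an energy homotopy, reduction to regular problems), which it follows in outline. Judged on its own merits it contains a broken step and a genuine gap. The broken step is your transversality argument: you claim that at every zero of the Wronskian the quantity $\spr{u(x)}{v(x)}$ has one fixed sign, hence that $\frac{d}{dx}W_x(u,v)$ has one fixed sign at every zero. Taken literally this is self-defeating: a $C^1$ function whose derivative were nonzero of the \emph{same} sign at all of its zeros could have at most one zero, so your claim would forbid the Wronskian from having more than one sign flip. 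In fact $\spr{u}{v}=\rho_u\rho_v\cos(\Delta_{1,0})$, so its sign (and that of $W_x'$) alternates according to whether the crossing occurs at an even or an odd multiple of $\pi$. The statement you actually need is that the angle difference crosses \emph{every} multiple of $\pi$ in the same direction; for Dirac operators this follows cleanly because $\theta'=(\lam-\phi_{\rm el})+(m+\phi_{\rm sc})\cos(2\theta)-\phi_{\rm am}\sin(2\theta)$ depends on $\theta$ only modulo $\pi$, whence $\Delta_{1,0}'=\lam_1-\lam_0>0$ exactly at each crossing. Your conclusion is correct, but the justification you give for it is wrong.

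The genuine gap is the core of the theorem. Precisely because all interior crossings are transversal and equi-oriented, zeros of $W_x(\psi_{0,-}(\lam_0),\psi_{0,+}(\mu))$ can neither collide nor be created in the interior as $\mu$ slides from $\lam_0$ to $\lam_1$; they can only enter or leave through the endpoints. Hence the whole content of the theorem is the endpoint bookkeeping you defer to your final paragraph: one must prove that exactly one zero enters through an endpoint each time $\mu$ crosses an eigenvalue of $H_0$, and never otherwise. Neither of the tools you invoke delivers this: Theorem~\ref{thm:scw} compares Wronskians of solutions at fixed energies, and monotonicity of $\theta_\pm$ in the energy does not control the boundary behaviour of the angle at a \emph{singular} endpoint, where the Weyl solution $\psi_{0,+}(\mu)$ itself changes with $\mu$ (its asymptotics switch from dominant to eigenfunction and back as $\mu$ passes an eigenvalue --- this switch is the mechanism, and it is exactly what needs proof). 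Your assertion that the floor/ceiling corrections ``absorb a possible half-bound-state contribution \dots precisely rather than up to a bounded error'' is a restatement of the theorem, not an argument; the soft bounds you do justify (square integrability, relative nonoscillation, the triangle inequality) yield only estimates with slack, as in Lemma~\ref{lem:estproj} with its $\pm 2$. The actual proofs in \cite{toscd} and \cite{kt} close this gap by approximating with regular operators and exploiting strong resolvent convergence/semicontinuity of spectral projections, together with the interpolation $H_\eps$ of Section~\ref{sec:proofreg}; none of this machinery appears in your sketch. Two smaller points: you invoke Theorem~\ref{thm:rosc}~(i), which this paper justifies by citing the very result you are proving, so as a self-contained argument that is circular; and relative nonoscillation only gives finiteness of $\ul{\#}$ and $\ol{\#}$, while existence of the limit $\#$ follows instead from the sign-definiteness of the effective perturbation $-(\lam_1-\lam_0)\id$ near the endpoints (cf.\ the remark after Definition~\ref{def:wsf}).
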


Combining this result with our triangle inequality already gives some rough
estimates in the spirit of Weidmann \cite{wd1} who treats the case $H_0=H_1$.

\begin{lemma}\label{lem:estproj}
For $j=0,1$ let $H_j$ be a self-adjoint operator associated with $\tau_j$ and separated
boundary conditions. Suppose 
that $(\lam_0, \lam_1)\subseteq\R\backslash(\sig_{ess}(H_0)\cup\sig_{ess}(H_1))$, then
\begin{align}
\nonumber
\dim\Ran P_{(\lam_0, \lam_1)} (H_1) &-
\dim\Ran P_{(\lam_0, \lam_1)} (H_0)\\
& \leq \underline{\#}(\psi_{1,\mp} (\lam_1), \psi_{0,\pm} (\lam_1)) - 
\overline{\#}(\psi_{1,\mp} (\lam_0), \psi_{0,\pm} (\lam_0)) + 2,
\end{align}
respectively,
\begin{align}
\nonumber
\dim\Ran P_{(\lam_0, \lam_1)} (H_1) &-
\dim\Ran P_{(\lam_0, \lam_1)} (H_0)\\
& \geq \overline{\#}(\psi_{1,\mp} (\lam_1), \psi_{0,\pm} (\lam_1)) - 
\underline{\#}(\psi_{1,\mp} (\lam_0), \psi_{0,\pm} (\lam_0)) - 2.
\end{align}
\end{lemma}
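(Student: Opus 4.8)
The plan is to derive Lemma~\ref{lem:estproj} by combining Theorem~\ref{thm:gst}, which handles the single-operator case $H_0=H_1$, with the triangle inequality (Theorem~\ref{thm:wtriang}) applied to a suitable chain of three solutions. The key idea is that each of the two projection dimensions on the left-hand side can be rewritten via Theorem~\ref{thm:gst} in terms of weighted sign flips of Wronskians formed from Weyl solutions of a \emph{single} operator at the two spectral endpoints, and then the triangle inequality lets us pass from those same-operator counts to the cross counts $\#(\psi_{1,\mp}(\cdot),\psi_{0,\pm}(\cdot))$ appearing in the statement.

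\medskip

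First I would apply Theorem~\ref{thm:gst} to $H_0$ and to $H_1$ separately, obtaining
\[
\dim\Ran P_{(\lam_0,\lam_1)}(H_j) = \#(\psi_{j,\mp}(\lam_0),\psi_{j,\pm}(\lam_1)), \qquad j=0,1,
\]
which is legitimate because the hypothesis $(\lam_0,\lam_1)\subseteq\R\backslash(\sig_{ess}(H_0)\cup\sig_{ess}(H_1))$ guarantees both intervals are free of essential spectrum for the respective operators, and the Weyl solutions at both endpoints exist. Subtracting gives the left-hand side as a difference of two same-operator counts. Next I would insert intermediate solutions to bridge the gap: to compare $\#(\psi_{1,\mp}(\lam_0),\psi_{1,\pm}(\lam_1))$ with the mixed quantities, I would run the triangle inequality along a chain such as $\psi_{1,\mp}(\lam_1) \to \psi_{0,\pm}(\lam_1) \to \psi_{0,\mp}(\lam_0) \to \psi_{1,\pm}(\lam_0)$ (or the appropriate ordering dictated by the $\mp/\pm$ conventions), so that the two same-operator counts get absorbed into the two cross counts plus controlled additive error. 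Each application of Theorem~\ref{thm:wtriang} costs at most $\pm 1$, and one must also recall that switching the order of arguments in $\#(u_0,u_1)$ changes it in a controlled way (it essentially flips sign up to a bounded constant), which is how the endpoint counts reorganize into the stated upper and lower bounds with the additive $+2$ and $-2$.

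\medskip

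Concretely, for the upper bound I would bound $\overline{\#}$ of the combined chain from above using the right half of the triangle inequality, isolating $\underline{\#}(\psi_{1,\mp}(\lam_1),\psi_{0,\pm}(\lam_1))$ as the positive contribution near $\lam_1$ and $-\overline{\#}(\psi_{1,\mp}(\lam_0),\psi_{0,\pm}(\lam_0))$ as the (negated) contribution near $\lam_0$; the accumulated $+1$'s from the two relevant applications of the triangle inequality produce the $+2$. The lower bound is obtained symmetrically from the left half of the inequality, swapping the roles of $\underline{\#}$ and $\overline{\#}$ and picking up $-2$. Throughout, Lemma~\ref{lem:nonoscingap} (or rather the finiteness already guaranteed by $\lam_0,\lam_1\notin\sig_{ess}$) ensures all the $\underline{\#}$ and $\overline{\#}$ quantities involved are finite, so the arithmetic is meaningful.

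\medskip

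The main obstacle I anticipate is bookkeeping rather than any deep analytic difficulty: one must choose the correct chain of solutions and track carefully how the weighted sign-flip counts behave under (a) reversing the two arguments of the Wronskian and (b) switching between Weyl solutions at $a$ versus $b$ (the $\mp$ versus $\pm$ subscripts). Getting the direction of each inequality right and verifying that exactly two triangle-inequality steps (hence the constant $2$) suffice — rather than accidentally introducing a larger constant — is the delicate part. Since the statement mirrors the Sturm--Liouville result in \cite{kt}, I expect the combinatorial structure to transfer essentially verbatim, with the Dirac-specific input entering only through Theorem~\ref{thm:gst} and the definition of the Pr\"ufer angle, so the estimate should follow by the same sequence of steps as in the cited reference.
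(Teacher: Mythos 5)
Your overall strategy --- Theorem~\ref{thm:gst} applied to $H_0$ and $H_1$ separately, followed by the triangle inequality (Theorem~\ref{thm:wtriang}) along a four-solution chain, with the antisymmetry of $\#$ under argument reversal used to reorganize the endpoint terms --- is exactly the route the paper intends: the lemma is stated right after the sentence ``Combining this result with our triangle inequality already gives some rough estimates,'' with the details deferred to \cite{kt}. Moreover, your chain $\psi_{1,\mp}(\lam_1)\to\psi_{0,\pm}(\lam_1)\to\psi_{0,\mp}(\lam_0)\to\psi_{1,\pm}(\lam_0)$ is the right one for the \emph{upper} bound: its middle link is a Theorem~\ref{thm:gst} count for $H_0$, its end-to-end count is one for $H_1$, only the $\lam_0$-link occurs with its arguments reversed, and two applications of the triangle inequality give exactly the $+2$.

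There are, however, two points where the proposal as written would not deliver the stated constants, and they are precisely the points you dismiss as bookkeeping. First, reversal of arguments is \emph{not} a symmetric operation: from $\ceil{-x}=-\floor{x}$ one gets $\#_{(c,d)}(v,u)+\#_{(c,d)}(u,v)\in\{-2,-1,0\}$, so the inequality $\#_{(c,d)}(v,u)\le -\#_{(c,d)}(u,v)$ is free, while the opposite inequality costs an additional $2$. Your upper bound uses only the free direction (on the reversed $\lam_0$-link); but your claim that ``the lower bound is obtained symmetrically\dots picking up $-2$'' fails, because running the \emph{same} chain with the other half of the triangle inequality forces the costly direction of the reversal and produces $-4$, not $-2$. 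To get $-2$ one must pass to a different chain --- one whose end-to-end count is the $H_0$ count and whose reversed link is the cross count at $\lam_1$ --- so that again only the free direction is invoked. Second, the mixed placement of $\ul{\#}$ at $\lam_1$ against $\ol{\#}$ at $\lam_0$ is strictly stronger than what a single pointwise chain inequality yields after taking limits: from $\#_{(c,d)}$-inequalities one obtains bounds of the form $\ul{\#}-\ul{\#}+2$ or $\ol{\#}-\ol{\#}+2$, and extracting the stated $\ul{\#}(\cdot,\lam_1)-\ol{\#}(\cdot,\lam_0)+2$ requires exploiting that reversal exchanges $\liminf$ and $\limsup$ together with the sign, and that $\#_{(c,d)}$ splits into independent contributions from the two cut-offs $c$ and $d$; your argument never engages with this. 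A smaller point: Theorem~\ref{thm:gst} assumes $[\lam_0,\lam_1]\cap\sig_{ess}(H_j)=\emptyset$ (closed interval, plus existence of the Weyl solutions), whereas the lemma's hypothesis only keeps the open interval away from the essential spectrum, so your first step needs the stronger reading of the hypothesis to be legitimate.
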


Given these preparations the proofs of Theorem~\ref{thm:wronskzeros} and Theorem~\ref{thmsing}.
can be done as in \cite{kt}.

\begin{proof}[Proof of Theorem~\ref{thm:wronskzeros}.]
For the proof one can literally follow the arguments in Section~6 of \cite{kt}. The only noteworthy difference
is that in Lemma~6.4 one has to use the $\limsup$ of the largest eigenvalue and the $\liminf$ of the lowest eigenvalue
of $\ti{\phi}$.
\end{proof}

\begin{proof}[Proof of Theorem~\ref{thmsing}.]
For the proof one can literally follow the arguments in Section~7 of \cite{kt}.
\end{proof}

\section{More on Pr\"ufer angles and the case of regular operators}
\label{sec:proofreg}

The purpose of this section is to collect some further facts on Pr\"ufer angles for
Wronskians and to prove Theorem~\ref{thm:wronskzeros} in the case of regular
operators. Even tough the Pr\"ufer angle $\Delta_{1,0}$ introduced below is
different from $\psi$ used in the introduction it will be equivalent for our purpose
(cf.\ Definition~\ref{def:eqpa} below). We closely follow \cite{kt} and we will provide
proofs only when there is a significant difference to the Sturm--Liouville case.

We first introduce Pr\"ufer variables for $u\in C(I,\R^2)$ defined by
\begin{equation}
u_1(x)=\rho_u(x)\sin(\theta_u(x)) \qquad u_2(x)=\rho_u(x)
\cos(\theta_u(x)).
\end{equation}
If $u$ is never $(0,0)$ and $u$ is continuous, then $\rho_u$ is
positive and $\theta_u$ is uniquely determined once a value of
$\theta_{u}(x_0)$, $x_0\in I$ is chosen by the requirement $\theta_u \in C(I,\R)$.

The connection with the Wronskian is given by
\begin{equation} \label{wpruefer}
W_x(u,v)= -\rho_u(x)\rho_v(x)\sin(\Delta_{v,u}(x)), \qquad
\Delta_{v,u}(x) = \theta_v(x)-\theta_u(x).
\ee
Hence the Wronskian vanishes if and only if the two Pr\"ufer angles differ by
a multiple of $\pi$. We will call the total difference
\be
\#_{(c,d)}(u_0,u_1) = \ceil{\Delta_{1,0}(d) / \pi} - \floor{\Delta_{1,0}(c) / \pi} -1
\ee
the number of weighted sign flips in $(c,d)$, where we have written $\Delta_{1,0}(x)=
\Delta_{u_1,u_0}$ for brevity.
 
Next, let us take two real-valued (nontrivial) solutions $u_j$, $j=1,2$, of $\tau_j u_j =\lam_j u_j$ and
associated Pr\"ufer variables $\rho_j$, $\theta_j$. Since we can
replace $\phi \to \phi - \lam$ it is no restriction to assume $\lam_0=\lam_1=0$.

Under these assumptions $W_x(u_0,u_1)$ is absolutely continuous and satisfies
\be \label{dwr}
W'_x(u_0,u_1) = \spr{u_0(x)}{(\phi_1(x)-\phi_0(x))u_1(x)}.
\ee

\begin{lemma}\label{lem:delinc}
Abbreviate $\Delta_{1,0}(x) = \theta_1(x) - \theta_0(x)$ and suppose
$\Delta_{1,0}(x_0) \equiv 0 \mod \pi$. If $-\spr{u_0(x)}{\Delta\phi(x) u_1(x)}$ is
(i) negative, (ii) zero, or (iii) positive for a.e.\ $x\in(x_0,x_0+\eps)$
respectively for a.e.\ $x\in(x_0-\eps,x_0)$ for some $\eps>0$, then the same is true for
$(\Delta_{1,0}(x) - \Delta_{1,0}(x_0))/(x - x_0)$.
\end{lemma}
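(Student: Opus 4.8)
The plan is to exploit the two identities already recorded: the Wronskian--Pr\"ufer relation \eqref{wpruefer}, $W_x(u_0,u_1) = -\rho_0(x)\rho_1(x)\sin(\Delta_{1,0}(x))$, together with the derivative formula \eqref{dwr}, $W'_x(u_0,u_1) = \spr{u_0(x)}{\Delta\phi(x)u_1(x)}$. Since $u_0,u_1$ are nontrivial solutions of a homogeneous linear first-order system they never vanish, so $\rho_0,\rho_1>0$ throughout, and by hypothesis $\Delta_{1,0}(x_0)=k\pi$ for some $k\in\Z$, which via \eqref{wpruefer} forces $W_{x_0}(u_0,u_1)=0$.

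The first key step is to integrate \eqref{dwr}. Because $W_x(u_0,u_1)$ is absolutely continuous and vanishes at $x_0$, for every $x\in(x_0,x_0+\eps)$ we have $W_x(u_0,u_1)=\int_{x_0}^x \spr{u_0(s)}{\Delta\phi(s)u_1(s)}\,ds$. In case (iii), where $-\spr{u_0(s)}{\Delta\phi(s)u_1(s)}>0$, i.e.\ the integrand is negative a.e.\ on the interval, this integral is strictly negative for each such $x$, so $W_x(u_0,u_1)<0$ on all of $(x_0,x_0+\eps)$; case (i) gives $W_x>0$ and case (ii) gives $W_x\equiv0$ in the same way.

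The second key step is to read off the position of $\Delta_{1,0}$ from the sign of the Wronskian via \eqref{wpruefer}. Since $\rho_0\rho_1>0$, a strict sign of $W_x$ throughout $(x_0,x_0+\eps)$ means $\sin(\Delta_{1,0}(x))$ has the opposite strict sign there and in particular never vanishes; hence $\Delta_{1,0}$, which is continuous and equals $k\pi$ at $x_0$, cannot cross any multiple of $\pi$ on the interval and stays trapped strictly between $k\pi$ and one of its neighbours. In case (iii), $\sin(\Delta_{1,0})>0$ confines $\Delta_{1,0}(x)$ to $(k\pi,(k+1)\pi)$, so $\Delta_{1,0}(x)>\Delta_{1,0}(x_0)$ and, as $x-x_0>0$, the quotient $(\Delta_{1,0}(x)-\Delta_{1,0}(x_0))/(x-x_0)$ is positive; case (i) is symmetric and yields a negative quotient, while case (ii) gives $\Delta_{1,0}(x)\equiv k\pi$ and hence a vanishing quotient.

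The left-sided statement, for $x\in(x_0-\eps,x_0)$, follows by the identical computation while tracking the extra sign from $x-x_0<0$ and from writing the integral as $-\int_x^{x_0}$: the sign of $W_x$ relative to the right-sided case flips, but so does the sign of $x-x_0$, so the conclusion for the difference quotient is unchanged. I expect the only point needing genuine care to be the no-crossing argument of the third step: one must really use that the Wronskian keeps a strict sign on the \emph{whole} one-sided interval, which is exactly what the integral representation delivers, in order to rule out $\Delta_{1,0}$ escaping $(k\pi-\pi,k\pi)$ or $(k\pi,k\pi+\pi)$ and thereby reversing the sign of the difference quotient.
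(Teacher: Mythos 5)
Your Steps 1 and 2 are fine, but Step 3 --- the ``no-crossing'' step you yourself single out as the crux --- contains a parity error that invalidates the proof in half of the admissible configurations. From $W_x(u_0,u_1)<0$ on $(x_0,x_0+\eps)$ you conclude via \eqref{wpruefer} that $\sin(\Delta_{1,0}(x))>0$ there, and then assert that this confines $\Delta_{1,0}$ to $(k\pi,(k+1)\pi)$. That assertion uses $\sin(k\pi+t)=\sin t$, which holds only for \emph{even} $k$. If $k$ is odd, then $\sin(k\pi+t)=-\sin t$, so $\sin(\Delta_{1,0})>0$ near $k\pi$ confines $\Delta_{1,0}$ to $((k-1)\pi,k\pi)$, and your own argument then yields a \emph{negative} difference quotient in case (iii). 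Nothing in the hypothesis excludes odd $k$: $\Delta_{1,0}(x_0)\equiv 0 \mod \pi$ with $k$ odd simply means that $u_1(x_0)$ is a negative multiple of $u_0(x_0)$, and Pr\"ufer angles are only free up to shifts by $2\pi$, not $\pi$.

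This gap cannot be closed within your reading of the statement, because the quantity $-\spr{u_0}{\Delta\phi\, u_1}$ is odd under the replacement $u_1\to-u_1$ while the conclusion is even (that replacement shifts $\theta_1$, hence $\Delta_{1,0}$, by $\pi$, leaving the difference quotient unchanged). Concretely, take $\phi_0=0$, $u_0\equiv(1,0)^T$, $\phi_1=\Delta\phi=-\id$, $u_1(x)=(-\cos x,\sin x)^T$, $x_0=0$: then $\theta_0\equiv\pi/2$, $\theta_1(x)=3\pi/2+x$, so the difference quotient equals $+1$, although $-\spr{u_0}{\Delta\phi\, u_1}=-\cos x<0$ near $0$, i.e.\ case (i) holds. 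The lemma is thus only coherent with the parity-invariant weight $-\spr{u_0(x)}{\Delta\phi(x)\, u_0(x)}$ (this is the quantity that actually governs the sign-flip count in the Introduction and in the discussion following the lemma, where the weight is the sign of $-\Delta\phi$). With that hypothesis, however, your naive integration of \eqref{dwr} no longer suffices, since the sign of $\spr{u_0}{\Delta\phi\, u_1}$ is not determined by that of $\spr{u_0}{\Delta\phi\, u_0}$ away from $x_0$. The standard remedy is to expand $u_1=a\,u_0 + |u_0|^{-2}W(u_0,u_1)\,(-\I\sig_2 u_0)$ with $a=\spr{u_0}{u_1}/|u_0|^2$ continuous and of constant sign $(-1)^k$ near $x_0$; then \eqref{dwr} becomes the linear equation $W'=a\spr{u_0}{\Delta\phi\, u_0}+gW$ with $g=\spr{u_0}{\Delta\phi(-\I\sig_2 u_0)}/|u_0|^2$ locally integrable, and an integrating factor gives $W$ the sign of $(-1)^k$ times that of $a$ times the hypothesis sign; tracking this through \eqref{wpruefer} places $\Delta_{1,0}$ above (resp.\ below) $k\pi$ for \emph{both} parities. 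This extra structure --- absent from your argument --- is exactly what makes the Sturm--Liouville analogue in the cited work of Kr\"uger and Teschl parity-safe, since there the hypothesis is placed on the potentials alone.
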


Hence $\#_{(c,d)}(u_0,u_1)$ counts the weighted sign flips of the Wronskian
$W_x(u_0,u_1)$, where a sign flip is counted as $+1$ if $-\Delta\phi$ is positive
in a neighborhood of the sign flip, it is counted as $-1$ if $-\Delta\phi$ is negative
in a neighborhood of the sign flip. If $\Delta\phi$ changes sign (i.e., it is positive
on one side and negative on the other) the Wronskian will not change its sign.
In particular, we obtain:

\begin{lemma}\label{lemnbzer2}
Let $u_0$, $u_1$ solve $\tau_j u_j = 0$, $j=0,1$, where $\Delta\phi\le 0$.
Then $\#_{(a,b)}(u_0,u_1)$ equals the number sign flips of $W(u_0,u_1)$
inside the interval $(a,b)$.
\end{lemma}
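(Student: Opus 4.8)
The plan is to derive this as a direct corollary of Lemma~\ref{lem:delinc}. Recall the definition
\[
\#_{(c,d)}(u_0,u_1) = \ceil{\Delta_{1,0}(d) / \pi} - \floor{\Delta_{1,0}(c) / \pi} -1,
\]
where by \eqref{wpruefer} the zeros of $W_x(u_0,u_1)$ in $(a,b)$ correspond precisely to the points where $\Delta_{1,0}(x) \equiv 0 \mod \pi$. The hypothesis $\Delta\phi \le 0$ means that the quantity $-\spr{u_0(x)}{\Delta\phi(x) u_1(x)}$ controlling the monotonicity in Lemma~\ref{lem:delinc} is (up to the vanishing of $u_0$ or $u_1$) of one sign; the key structural fact is that whenever $\Delta_{1,0}$ crosses a multiple of $\pi$, it can only cross in the increasing direction (or pause), so each such crossing is a genuine sign flip of the Wronskian counted with weight $+1$.

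First I would fix a value $x_0 \in (a,b)$ with $\Delta_{1,0}(x_0) \equiv 0 \mod \pi$ and apply Lemma~\ref{lem:delinc} with $\Delta\phi \le 0$. Because $-\spr{u_0}{\Delta\phi\, u_1}$ cannot be negative under this sign assumption, case (i) of the lemma is excluded; thus on each side of $x_0$ the difference quotient $(\Delta_{1,0}(x)-\Delta_{1,0}(x_0))/(x-x_0)$ is nonnegative, which shows that $\Delta_{1,0}$ is nondecreasing through every multiple of $\pi$. This monotonicity is what guarantees that $\Delta_{1,0}$ cannot return to a level $k\pi$ after leaving it from above, so the multiples of $\pi$ it attains are traversed monotonically. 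Consequently each zero of $W_x(u_0,u_1)$ in $(a,b)$ is an honest sign flip (the Wronskian changes sign as $\Delta_{1,0}$ passes through $k\pi$), and each is weighted $+1$.

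Next I would translate the monotone behavior of $\Delta_{1,0}$ into the floor/ceiling count. Since $\Delta_{1,0}$ is nondecreasing at the multiples of $\pi$ and varies continuously, the number of sign flips of $W_x(u_0,u_1)$ on any $(c,d)\subset(a,b)$ equals the net number of multiples of $\pi$ strictly crossed by $\Delta_{1,0}$ between $c$ and $d$. A short bookkeeping argument with $\ceil{\cdot}$ and $\floor{\cdot}$ then identifies this net count with $\ceil{\Delta_{1,0}(d)/\pi} - \floor{\Delta_{1,0}(c)/\pi} - 1$, at least once $c,d$ are taken to be points where $\Delta_{1,0}$ is not itself a multiple of $\pi$ (i.e.\ away from the zeros); one then lets $c \downarrow a$ and $d \uparrow b$ to obtain $\#_{(a,b)}(u_0,u_1)$ as the total number of sign flips inside $(a,b)$.

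The main obstacle I anticipate is the boundary bookkeeping, namely making sure that zeros at or near the endpoints are counted correctly and that the $-1$ in the definition together with the ceiling/floor rounding produces exactly the number of \emph{interior} sign flips rather than over- or under-counting by one. The interior monotonicity from Lemma~\ref{lem:delinc} does the conceptual work; the delicate part is verifying that, because $\Delta\phi \le 0$ forces monotone crossings, no spurious contribution arises from the rounding at $c$ and $d$, so that in the limit $\#_{(a,b)}(u_0,u_1)$ coincides precisely with the count of sign flips of $W(u_0,u_1)$ strictly inside $(a,b)$. Since this is exactly the situation already handled in \cite{kt} for the Sturm--Liouville case and the Pr\"ufer formalism is identical here, the argument transfers verbatim.
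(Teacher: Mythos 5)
Your overall route --- monotone crossings of $\Delta_{1,0}$ through multiples of $\pi$ via Lemma~\ref{lem:delinc}, then floor/ceiling bookkeeping --- is exactly the paper's (the paper derives the lemma as an immediate consequence of the discussion following Lemma~\ref{lem:delinc}). But the step your whole argument rests on is false: you claim that $\Delta\phi\le 0$ forces $-\spr{u_0(x)}{\Delta\phi(x)u_1(x)}\ge 0$, so that case (i) of Lemma~\ref{lem:delinc} is excluded. Negative semidefiniteness of the matrix $\Delta\phi$ controls the sign of the \emph{quadratic} form $\spr{w}{\Delta\phi\, w}$, not of the bilinear form $\spr{u_0}{\Delta\phi\, u_1}$ built from two different vectors. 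Indeed, at a zero $x_0$ of the Wronskian one has $u_1(x_0)=(-1)^k\bigl(\rho_1(x_0)/\rho_0(x_0)\bigr)u_0(x_0)$ when $\Delta_{1,0}(x_0)=k\pi$, hence
\[
-\spr{u_0(x_0)}{\Delta\phi(x_0)u_1(x_0)} = -(-1)^k \frac{\rho_1(x_0)}{\rho_0(x_0)} \spr{u_0(x_0)}{\Delta\phi(x_0)u_0(x_0)},
\]
which for $\Delta\phi\le 0$ is nonnegative at even multiples of $\pi$ but \emph{nonpositive} at odd multiples. Concretely, take $\phi_0=0$, $\phi_1=-\eps\,\id$ with $\eps>0$, $u_0=(0,1)^T$, $u_1=(\sin(\eps x),\cos(\eps x))^T$: then $\Delta_{1,0}(x)=\eps x$ increases through every multiple of $\pi$, while $-\spr{u_0}{\Delta\phi\,u_1}=\eps\cos(\eps x)$ is negative near each odd crossing. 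So case (i) is not excluded; worse, if you apply Lemma~\ref{lem:delinc} literally at an odd crossing you would conclude that $\Delta_{1,0}$ decreases there, and your count would then alternate in sign instead of producing the number of sign flips. The parenthetical ``up to the vanishing of $u_0$ or $u_1$'' does not help: nontrivial solutions of a Dirac system never vanish, and the problem is orientation reversal of $u_1$ relative to $u_0$, not vanishing.

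What is missing is the parity cancellation. Near $x_0$ write $\theta_1=\theta_0+k\pi+\delta$; then $\sin(\Delta_{1,0})=(-1)^k\sin\delta$ by \eqref{wpruefer}, while $u_1=(-1)^k(\rho_1/\rho_0)\bigl(\cos\delta\, u_0+\sin\delta\,(\I\sig_2 u_0)\bigr)$, so the two factors $(-1)^k$ cancel when \eqref{dwr} is inserted, and one obtains at every crossing
\[
\Delta_{1,0}'(x_0) = -\frac{1}{\rho_0(x_0)^2}\,\spr{u_0(x_0)}{\Delta\phi(x_0)u_0(x_0)} \;\ge\; 0
\]
independently of the parity of $k$. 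In other words, the monotonicity at a crossing is governed by the quadratic form of $-\Delta\phi$ along $u_0$ --- equivalently by the matrix sign of $-\Delta\phi$, which is precisely how the paper's discussion after Lemma~\ref{lem:delinc} phrases the weights, and how the Sturm--Liouville analogue in \cite{kt} is stated (there the hypothesis is on $q_0-q_1$ alone, not on $(q_0-q_1)u_0u_1$). So Lemma~\ref{lem:delinc} must be read with $u_0$ in both slots of the form; your literal reading with $u_1$ is exactly the trap. Once the corrected monotonicity statement is in hand, your bookkeeping paragraph goes through and yields the lemma, but as written the proof's central sign claim is wrong.
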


In the case $\Delta\phi\ge 0$ we get of course the corresponding negative number
except for the fact that zeros at the boundary points are counted as well since
$\floor{-x}= -\ceil{x}$. That is, if $\Delta\phi< 0$, then $\#_{(c,d)}(u_0,u_1)$
equals the number of zeros of the Wronskian in $(c,d)$ while if $\Delta\phi> 0$,
it equals minus the number of zeros in $[c,d]$. In the next theorem we will see that
this is quite natural. In addition, note that $\#(u,u) = -1$.

Finally, we establish the connection with the spectrum of regular operators.
A finite end point is called regular if all entries of $\phi$ are integrable near
this end point.  In this case boundary values for
all functions exist at this end point. In particular, $\tau$ is called regular if
both end points $a,b$ are regular. In the regular case the resolvent of
$H$ is Hilbert-Schmidt and hence the spectrum is purely discrete (i.e.,
$\sig_{ess}(H)=\emptyset$).

\begin{theorem} \label{thm:reg}
Let $H_0$, $H_1$ be regular Sturm--Liouville operators associated with $\tau_0$, $\tau_1$ and
the same boundary conditions at $a$ and $b$. Then
\be \label{eqreg}
\dim\Ran\, P_{(-\infty, \lam_1)}(H_1) - \dim\Ran\, P_{(-\infty, \lam_0]}(H_0) =
\#_{(a,b)}(u_{0,\pm}(\lam_0), u_{1,\mp}(\lam_1)).
\ee
\end{theorem}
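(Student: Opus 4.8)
The plan is to anchor the identity at the diagonal case $\tau_1=\tau_0$, where it reduces to the single-operator oscillation theorem (Theorem~\ref{thm:gst}), and then to deform $\tau_1$ to the given regular potential, checking that both sides of \eqref{eqreg} jump by the same amount whenever an eigenvalue of $H_1$ crosses $\lam_1$ (and likewise in $\lam_0$). Throughout I fix the lower signs, writing $u_0=u_{0,-}(\lam_0)$ for the solution obeying the boundary condition at $a$ and $u_1=u_{1,+}(\lam_1)$ for the one obeying the boundary condition at $b$; the opposite choice yields the same count by Corollary~\ref{cor:indisol} together with Theorem~\ref{thm:scw}. With these choices the Pr\"ufer angle $\theta_{u_0}(a)$ is pinned to the left boundary angle $\alpha$ independently of $\lam_0$, and $\theta_{u_1}(b)$ is pinned to the right boundary angle $\beta$ independently of $\lam_1$, so that
\[
\Delta_{1,0}(a)=\theta_{u_1}(a,\lam_1)-\alpha,\qquad \Delta_{1,0}(b)=\beta-\theta_{u_0}(b,\lam_0),
\]
and all spectral content sits in the two angles $\theta_{u_1}(a,\lam_1)$ and $\theta_{u_0}(b,\lam_0)$. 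Here the hypothesis that $H_0$ and $H_1$ carry the same boundary conditions is exactly what guarantees that the pinned angles $\alpha$ at $a$ and $\beta$ at $b$ are common to both operators.

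For the diagonal case $\tau_1=\tau_0$ the right-hand side of \eqref{eqreg} is $\#_{(a,b)}(u_{0,-}(\lam_0),u_{0,+}(\lam_1))$, which by Theorem~\ref{thm:gst} (using that in the regular case $\#=\#_{(a,b)}$ and that $\psi$ and $\Delta_{1,0}$ are equivalent) equals $\dim\Ran P_{(\lam_0,\lam_1)}(H_0)$ for $\lam_0<\lam_1$, the remaining orderings being handled identically. Since $(-\infty,\lam_1)\setminus(-\infty,\lam_0]=(\lam_0,\lam_1)$, this coincides with the left-hand side, establishing \eqref{eqreg} when $\tau_1=\tau_0$.

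Next I would deform $\tau_1$ from $\tau_0$ to the given potential along a regular path, keeping $\tau_0,\lam_0,\lam_1$ fixed. Away from parameter values at which an eigenvalue of $H_1$ equals $\lam_1$ both sides are constant: the left-hand side because the spectral projection is locally constant, the right-hand side by continuity of the Pr\"ufer angles in the coefficients. At a crossing, where an eigenvalue $E$ of $H_1$ passes $\lam_1$, the right solution $u_1=u_{1,+}(E)$ simultaneously satisfies the boundary condition at $a$, so $\theta_{u_1}(a,\lam_1)\equiv\alpha\bmod\pi$ and $\Delta_{1,0}(a)$ meets a multiple of $\pi$. The definite sign of $\partial_\lam\theta$ underlying Theorem~\ref{thm:gst}—which for the \emph{right} solution makes $\theta_{u_1}(a,\cdot)$ decrease—forces $\Delta_{1,0}(a)$ to cross this multiple monotonically, so $\floor{\Delta_{1,0}(a)/\pi}$ drops by one and $\#_{(a,b)}(u_0,u_1)$ increases by one, matching the jump of $\dim\Ran P_{(-\infty,\lam_1)}(H_1)$. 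The dependence on $\lam_0$ at the endpoint $b$ is symmetric: there $\theta_{u_0}(b,\cdot)$ \emph{increases}, $\Delta_{1,0}(b)$ crosses a multiple of $\pi$ downward, and $\ceil{\Delta_{1,0}(b)/\pi}$ governs the jump, so both sides change in lockstep and \eqref{eqreg} propagates along the whole path.

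The main obstacle is precisely this jump bookkeeping at the two endpoints: one must show that the asymmetric interval conventions on the left of \eqref{eqreg}—open at $\lam_1$ but closed at $\lam_0$—line up exactly with the asymmetric $\ceil{\cdot}$ and $\floor{\cdot}$ and the $-1$ in the definition of $\#_{(a,b)}$. This requires analysing the one-sided limits at the instant $\Delta_{1,0}$ lands on an exact multiple of $\pi$ at $a$ or $b$ (i.e.\ when $\lam_1$ or $\lam_0$ is itself an eigenvalue): at $b$ the ceiling attains its new value \emph{at} the crossing, matching the closed interval $(-\infty,\lam_0]$, whereas at $a$ the floor drops only strictly \emph{past} the crossing, matching the open interval $(-\infty,\lam_1)$. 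Verifying that these conventions agree in every boundary case—rather than in the generic situation away from eigenvalues, which is routine—is the delicate point of the argument.
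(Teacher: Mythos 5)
Your overall strategy---anchoring at $\tau_1=\tau_0$ via Theorem~\ref{thm:gst} and then deforming $\tau_1$ to the given potential while matching the jumps of both sides---is the same as the paper's, which interpolates along $\tau_\eps=\tau_0+\eps(\phi_1-\phi_0)$ and analyses Pr\"ufer angles. The genuine gap is in your justification of the lockstep property at crossings. You argue that when an eigenvalue of the deformed operator meets $\lam_1$, the definite sign of $\partial_\lam\theta$ ``forces $\Delta_{1,0}(a)$ to cross this multiple \textnormal{[of $\pi$]} monotonically.'' That is a non sequitur: $\partial_\lam\theta_{u_{1,+}}(a,\cdot)<0$ is monotonicity in the \emph{spectral} parameter, while what your bookkeeping needs is control of $\eps\mapsto\theta_{u_{1,\eps,+}}(a,\lam_1)$ in the \emph{deformation} parameter. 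Monotonicity in $\eps$ holds exactly when $\Delta\phi$ has a definite sign---this is the content of the paper's Lemma~\ref{prwpsiepsdot}, the inequalities \eqref{thetadot}, and Lemma~\ref{lemevheps} (eigenvalues of $H_\eps$ analytic and monotone in $\eps$). For sign-indefinite $\Delta\phi$ an eigenvalue branch of $H_\eps$ can touch $\lam_1$ and retreat, or cross it with vanishing $\eps$-derivative; then there is no monotone transversal crossing, and the boundary cases you yourself single out as ``the delicate point'' (including the path ending exactly at a crossing, i.e.\ $\lam_1\in\sig(H_1)$) are acknowledged but never resolved. (Incidentally, your reduction of one sign choice $u_{0,\pm},u_{1,\mp}$ to the other via Corollary~\ref{cor:indisol} and Theorem~\ref{thm:scw} only bounds the discrepancy of the two counts; it does not prove they are equal, which the theorem asserts.)

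What is missing is one of two things. Either the paper's route, following \cite{kt}: first prove \eqref{eqreg} for perturbations of definite sign, where the monotonicity lemmas make both sides jump at the same $\eps$ with the same one-sided continuity (this is why the paper states \eqref{thetadot} and Lemma~\ref{lemevheps} only under $\phi_0-\phi_1\ge 0$ or $\le 0$), and then reduce a general $\Delta\phi$ to two sign-definite steps through an intermediate operator---a reduction that needs an exact additivity statement for $\#$ with Weyl-type intermediate solutions, not merely the triangle inequality of Theorem~\ref{thm:wtriang} with its $\pm1$ errors. Or else a replacement for monotonicity: by the mean value theorem and $\partial_\lam\theta<0$ one gets, near a crossing, $\mathrm{sign}\bigl(\Delta_{1,0}(a)-k\pi\bigr)=\mathrm{sign}\bigl(E(\eps)-\lam_1\bigr)$ for the analytic eigenvalue branch $E(\eps)$ through the crossing, which makes $\floor{\Delta_{1,0}(a)/\pi}$ and the eigenvalue count below $\lam_1$ move in lockstep no matter how the branch crosses or touches; but you state neither argument. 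Finally, a point you share with the paper's (carelessly copied) statement but which your base-case manipulation $(-\infty,\lam_1)\setminus(-\infty,\lam_0]=(\lam_0,\lam_1)$ makes acute: Dirac operators are unbounded from below, so both ranks on the left of \eqref{eqreg} are infinite, and the difference (as well as its ``local constancy'' along your path) must first be given a renormalized meaning before any jump counting makes sense.
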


The proof will be given below employing interpolation between
$H_0$ and $H_1$, using $H_\eps= (1-\eps) H_0 + \eps H_1$ together with
a careful analysis of Pr\"ufer angles.

It is important to observe that in the special case $H_1=H_0$, the left-hand side
equals $\dim\Ran\, P_{(\lam_1,\lam_0)}(H_0)$ if $\lam_1>\lam_0$ and
$-\dim\Ran\, P_{[\lam_0,\lam_1]}(H_0)$ if $\lam_1<\lam_0$. This is
of course in accordance with our previous observation that
$\#(u_{0,\pm}(\lam_0), u_{1,\mp}(\lam_1))$ equals the
number of zeros in $(a,b)$ if $\lam_1>\lam_0$ while it equals minus
the numbers of zeros in $[a,b]$ if $\lam_1<\lam_0$.

Now let us suppose that $\tau_{0,1}$ are both regular at $a$ and $b$ with boundary conditions
\be \label{bc}
\cos(\alpha) f_1(a) - \sin(\alpha) f_2(a) =0 , \quad
\cos(\beta) f_1(b) - \sin(\beta) f_2(b) =0.
\ee
Hence we can choose $u_\pm(\lam,x)$ such that
$u_-(\lam,a)=(\sin(\alpha),\cos(\alpha))$ respectively
$u_+(\lam,b)= (\sin(\beta),\cos(\beta))$. In particular,
we may choose
\be \label{normtha}
\theta_-(\lam,a) =\alpha \in [0,\pi), \quad -\theta_+(\lam,b) = \pi -\beta \in
[0,\pi).
\ee

Next we introduce
\be
\tau_\eps = \tau_0 + \eps (\phi_1 -\phi_0)
\ee
and investigate the dependence with respect to $\eps\in[0,1]$. If $u_\eps$
solves $\tau_\eps u_\eps = 0$, then the corresponding Pr\"ufer angles satisfy
\be
\dot{\theta}_\eps(x) = -\frac{W_x(u_\eps, \dot{u}_\eps)}{\rho_\eps^2(x)},
\ee
where the dot denotes a derivative with respect to $\eps$.

\begin{lemma} \label{prwpsiepsdot}
We have
\be
W_x(u_{\eps,\pm}, \dot{u}_{\eps,\pm}) = \left\{
\begin{array}{l} \int_x^b \spr{u_{\eps,+}(r)}{(\phi_0(r)-\phi_1(r)) u_{\eps,+}(r)} dr \\ -\int_a^x
\spr{u_{\eps,-}(r)}{(\phi_0(r)-\phi_1(r)) u_{\eps,-}(r)}  dr \end{array} \right. ,
\ee
where the dot denotes a derivative with respect to $\eps$ and
$u_{\eps,\pm}(x)= u_{\eps,\pm}(0,x)$.
\end{lemma}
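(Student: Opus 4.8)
The plan is to compute $W_x(u_{\eps,\pm},\dot u_{\eps,\pm})$ by differentiating the defining equation $\tau_\eps u_{\eps,\pm}=0$ in $\eps$ and exploiting a first-order variation-of-parameters identity for the Wronskian. Writing $\tau_\eps=\tau_0+\eps\,\Delta\phi$ with $\Delta\phi=\phi_1-\phi_0$, differentiating $\tau_\eps u_{\eps,\pm}=0$ with respect to $\eps$ gives
\be
\tau_\eps \dot u_{\eps,\pm} = -\Delta\phi\, u_{\eps,\pm}.
\ee
So $\dot u_{\eps,\pm}$ solves an inhomogeneous Dirac equation with inhomogeneity $-\Delta\phi\, u_{\eps,\pm}$, and the claimed formula should emerge as the integrated right-hand side. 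The main computational tool is \eqref{dwr}: for two solutions of different expressions, the Wronskian derivative picks up the difference of potentials. I will use the analogous product rule for a solution paired with a function solving an inhomogeneous equation.

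\textbf{Key steps.} First I would record the standard identity: for $f,g\in AC_{loc}$ one has $\frac{d}{dx}W_x(f,g)=W_x(\tau f,g)-W_x(f,\tau g)$ computed with the \emph{same} $\tau=\frac1{\I}\sig_2\frac{d}{dx}+\phi$, because the potential terms cancel in the Wronskian (the matrices $\phi$ are symmetric and $\sig_2\phi$ contributions cancel against $\phi\sig_2$ under the Wronskian pairing). Concretely, since $W_x(f,g)=\I\spr{f^*}{\sig_2 g}$ and $\sig_2(\frac1{\I}\sig_2\frac{d}{dx})$ produces $\frac{d}{dx}$, a short calculation gives
\be
\frac{d}{dx}W_x(f,g)=\spr{f^*}{(\tau g)}-\spr{(\tau f)^*}{g}
\ee
up to the convention already fixed by \eqref{dwr}. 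Applying this with $f=u_{\eps,\pm}$ (so $\tau_\eps f=0$) and $g=\dot u_{\eps,\pm}$ (so $\tau_\eps g=-\Delta\phi\,u_{\eps,\pm}$) yields
\be
\frac{d}{dx}W_x(u_{\eps,\pm},\dot u_{\eps,\pm})
=\spr{u_{\eps,\pm}(x)}{-\Delta\phi(x)\,u_{\eps,\pm}(x)}
=\spr{u_{\eps,\pm}(x)}{(\phi_0(x)-\phi_1(x))\,u_{\eps,\pm}(x)}.
\ee
The second step is integration together with the correct boundary condition. For the $+$ (Weyl/boundary-at-$b$) solution, the normalization $u_+(\lam,b)=(\sin\beta,\cos\beta)$ is $\eps$-independent by \eqref{normtha}, so $\dot u_{\eps,+}(b)=0$ and hence $W_b(u_{\eps,+},\dot u_{\eps,+})=0$; integrating the derivative from $x$ to $b$ gives the first line. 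For the $-$ solution, $\dot u_{\eps,-}(a)=0$ similarly, so $W_a(u_{\eps,-},\dot u_{\eps,-})=0$, and integrating from $a$ to $x$ produces the second line with its minus sign.

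\textbf{The main obstacle} is getting the Wronskian product rule exactly right for two functions solving \emph{different} equations and pinning down the sign conventions so they match \eqref{wpruefer} and \eqref{dwr}; this is where the Dirac case differs from Sturm--Liouville, since here the cancellation of the potential relies on $\sig_2\phi+\phi\sig_2$ structure rather than on the symmetric second-order form. I would verify the cancellation directly from $W_x(f,g)=f_1g_2-f_2g_1$ by substituting $\I\sig_2 f'=(\phi-z)f$ componentwise, checking that the $\phi$-contributions drop out because $\spr{f^*}{\sig_2\sig_k f}$ pairs are purely imaginary for the real solutions involved. A secondary point is justifying that the boundary terms vanish for Weyl solutions at singular endpoints (as opposed to the regular case being treated here), but in the regular setting of this section the fixed boundary data \eqref{normtha} make this immediate.
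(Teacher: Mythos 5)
Your overall strategy is exactly the intended one (the paper gives no proof of this lemma, deferring to the Sturm--Liouville argument in \cite{kt}): differentiate $\tau_\eps u_{\eps,\pm}=0$ in $\eps$ to get $\tau_\eps \dot u_{\eps,\pm}=-\Delta\phi\, u_{\eps,\pm}$, apply a Lagrange-type identity to compute $\frac{d}{dx}W_x(u_{\eps,\pm},\dot u_{\eps,\pm})$, and integrate using that the normalizations \eqref{normtha} are $\eps$-independent, so $\dot u_{\eps,+}(b)=0$ and $\dot u_{\eps,-}(a)=0$ kill the boundary terms. All three ingredients are correct.

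However, there is a genuine sign error in the middle which, taken literally, makes your steps prove the negative of the lemma. For the paper's conventions ($W_x(f,g)=f_1g_2-f_2g_1$, $\tau=\frac{1}{\I}\sig_2\frac{d}{dx}+\phi$ with $\phi$ real symmetric) the correct identity for real-valued $f,g$ is
\be
\frac{d}{dx}W_x(f,g)=\spr{\tau f}{g}-\spr{f}{\tau g},
\ee
not $\spr{f}{\tau g}-\spr{\tau f}{g}$ as you wrote. The consistency check you appeal to shows this: taking $f=u_0$, $g=u_1$ with $\tau_0 u_0=0$, $\tau_1 u_1=0$, hence $\tau_0 u_1=-\Delta\phi\,u_1$, the displayed identity gives $W_x'(u_0,u_1)=\spr{u_0}{\Delta\phi\,u_1}$, which is precisely \eqref{dwr}, while your sign gives the negative of \eqref{dwr} --- the very equation you invoke to fix the convention. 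Consequently the correct pointwise statement here is
\be
\frac{d}{dx}W_x(u_{\eps,\pm},\dot u_{\eps,\pm})=-\spr{u_{\eps,\pm}}{\tau_\eps\dot u_{\eps,\pm}}
=\spr{u_{\eps,\pm}}{(\phi_1-\phi_0)u_{\eps,\pm}},
\ee
the opposite of what you state. With the corrected sign, integration does give the lemma: $W_x(u_{\eps,+},\dot u_{\eps,+})=W_b-\int_x^b\spr{u_{\eps,+}}{(\phi_1-\phi_0)u_{\eps,+}}\,dr=\int_x^b\spr{u_{\eps,+}}{(\phi_0-\phi_1)u_{\eps,+}}\,dr$ and $W_x(u_{\eps,-},\dot u_{\eps,-})=\int_a^x\spr{u_{\eps,-}}{(\phi_1-\phi_0)u_{\eps,-}}\,dr=-\int_a^x\spr{u_{\eps,-}}{(\phi_0-\phi_1)u_{\eps,-}}\,dr$. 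With your sign, the same integration yields exactly the negatives of both lines, so your claim that ``integrating gives the first line'' does not follow from your own intermediate formula. The argument is repaired by a one-line correction of the Lagrange identity, but as written it is internally inconsistent.
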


Denoting the Pr\"ufer angles of $u_{\eps,\pm}(x)= u_{\eps,\pm}(0,x)$ by $\theta_{\eps,+}(x)$,
this result implies for $\phi_0-\phi_1\ge 0$,
\begin{align} \nn
\dot{\theta}_{\eps,+}(x) &= -\frac{\int_x^b \spr{u_{\eps,+}(r)}{(\phi_0(r)-\phi_1(r)) u_{\eps,+}(r)}  dr}{\rho_{\eps,+}(x)^2}
\le 0, \\ \label{thetadot}
\dot{\theta}_{\eps,-}(x) &= \frac{\int_a^x \spr{u_{\eps,-}(r)}{(\phi_0(r)-\phi_1(r)) u_{\eps,-}(r)} dr}{\rho_{\eps,-}(x)^2}
\ge 0,
\end{align}
with strict inequalities if $\phi_0>\phi_1$ on a subset of positive Lebesgue measure of $(x,b)$,
respectively $(a,x)$.

Now we are ready to investigate the associated operators $H_0$ and $H_1$.
In addition, we will choose the same boundary conditions for $H_\eps$ as
for $H_0$ and $H_1$.

\begin{lemma} \label{lemevheps}
Suppose $\phi_0-\phi_1\ge 0$ (resp.\ $\phi_0-\phi_1\leq 0$).
Then the eigenvalues of $H_\eps$ are analytic functions with respect to
$\eps$ and they are decreasing (resp.\ increasing).
\end{lemma}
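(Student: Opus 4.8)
The plan is to sidestep abstract analytic perturbation theory (awkward here since the $L^1$ potentials need not produce an $\eps$-independent operator domain) and to extract both the analyticity and the monotonicity from the Pr\"ufer angle of the solution obeying the left boundary condition, building directly on \eqref{thetadot}.

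First I would set up the eigenvalue condition. Let $u_{\eps,-}(\lam,\cdot)$ denote the solution of $\tau_\eps u=\lam u$ satisfying the left boundary condition in \eqref{bc}, with Pr\"ufer angle $\theta_{\eps,-}(\lam,\cdot)$ normalized by $\theta_{\eps,-}(\lam,a)=\alpha$ as in \eqref{normtha}. In the regular case $\lam$ is an eigenvalue of $H_\eps$ exactly when $u_{\eps,-}(\lam,\cdot)$ also meets the right boundary condition, that is, when
\be
\Theta(\eps,\lam):=\theta_{\eps,-}(\lam,b)-\beta\equiv 0\mod\pi.
\ee
Since $u_{\eps,-}$ solves a linear first-order system whose coefficient matrix $\I\sig_2(\lam-\phi_0-\eps(\phi_1-\phi_0))$ is affine in $(\eps,\lam)$ with $L^1$ dependence on $x$, the Volterra iteration shows $(\eps,\lam)\mapsto u_{\eps,-}(\lam,x)$ is jointly real-analytic; as the solution never vanishes, $\theta_{\eps,-}(\lam,b)$ and hence $\Theta$ are real-analytic as well.

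Next I would feed two monotonicity facts into the implicit function theorem. A short computation gives the Pr\"ufer equation $\theta_{\eps,-}'=(\lam-\phi_{\rm el})+(m+\phi_{\rm sc})\cos(2\theta_{\eps,-})-\phi_{\rm am}\sin(2\theta_{\eps,-})$, from which $v:=\partial_\lam\theta_{\eps,-}$ solves a linear equation $v'=g\,v+1$ with $v(\lam,a)=0$, so $v(\lam,b)>0$; thus $\theta_{\eps,-}(\lam,b)$ is strictly increasing in $\lam$ (the standard rotation for regular Dirac operators, cf.\ \cite{toscd}). From \eqref{thetadot}, applied to $\tau_\eps-\lam$ (the shift cancels in $\phi_0-\phi_1$), we get $\partial_\eps\theta_{\eps,-}(\lam,b)\ge 0$ whenever $\phi_0-\phi_1\ge 0$. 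Because $\partial_\lam\Theta=v(\lam,b)\ne 0$, the implicit function theorem makes each branch $\lam_n(\eps)$ of $\Theta(\eps,\lam_n)=n\pi$ real-analytic, and differentiating yields
\be
\lam_n'(\eps)=-\frac{\partial_\eps\theta_{\eps,-}(\lam_n(\eps),b)}{\partial_\lam\theta_{\eps,-}(\lam_n(\eps),b)}\le 0.
\ee
Hence the eigenvalues are analytic and nonincreasing for $\phi_0-\phi_1\ge 0$; reversing the inequalities treats $\phi_0-\phi_1\le 0$ and gives nondecreasing branches. (The same sign also drops out of the Feynman--Hellmann identity $\lam_n'(\eps)=\spr{u}{(\phi_1-\phi_0)u}$ once one has the analytic eigenpairs.)

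The hard part is the analyticity/nondegeneracy input rather than the monotonicity: one has to justify joint analyticity of the Pr\"ufer angle with merely $L^1$ coefficients and, above all, secure the strict rotation $\partial_\lam\theta_{\eps,-}(\lam,b)>0$ so that the implicit function theorem applies and no branch stalls. By contrast the $\eps$-monotonicity is already packaged in \eqref{thetadot} and is immediate.
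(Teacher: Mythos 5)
Your proof is correct and follows essentially the route the paper intends: the paper states this lemma without a written proof (deferring to the regular-operator section of \cite{kt}), and the ingredients it sets up for exactly this purpose---the normalization \eqref{normtha}, Lemma~\ref{prwpsiepsdot} and its consequence \eqref{thetadot}---are precisely what you combine with joint analyticity of the solutions, the strict $\lam$-rotation of the Pr\"ufer angle, and the implicit function theorem. The only cosmetic difference is that the argument imported from \cite{kt} phrases the eigenvalue condition and its nondegeneracy via simple zeros of the jointly analytic Wronskian $W_b(u_{\eps,+}(\lam),u_{\eps,-}(\lam))$, whereas you use the equivalent scalar condition $\theta_{\eps,-}(\lam,b)\equiv\beta \mod \pi$.
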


In particular, this implies that $\dim\Ran P_{(-\infty,\lam)} (H_\eps)$ is continuous
from below (resp. above) in $\eps$ if $\phi_0-\phi_1\ge 0$ (resp.\ $\phi_0-\phi_1\leq 0$).

Now the proof of Theorem~\ref{thm:reg} follows literally as in \cite{kt}.

\section{Relative oscillation criteria}
\label{sec:roc}

As in the previous sections, we will consider two Dirac
operators $\tau_j$, $j=0,1$, and corresponding self-adjoint operators $H_j$, $j=0,1$.
Now we want to answer the question, when a boundary point $E$ of the essential
spectrum of $H_0$ is an accumulation point of eigenvalues of $H_1$.
By Theorem~\ref{thm:rosc} we need to investigate if $\tau_1-E$ is relatively oscillatory
with respect to $\tau_0-E$ or not, that is, if the difference of Pr\"ufer angels
$\Delta_{1,0}=\theta_1-\theta_0$ is bounded or not.

Hence the first step is to derive an ordinary differential equation for $\Delta_{1,0}$. While
this can easily be done by subtracting the differential equations for
$\theta_1$ and $\theta_0$, the result turns out to be not very effective for our purpose.
However, since the number of weighted sign flips $\#_{(c,d)}(u_0,u_1)$ is all we
are eventually interested in, any {\em other} Pr\"ufer angle which gives the same
result will be as good:

\begin{definition}\label{def:eqpa}
We will call a continuous function $\psi$ a Pr\"ufer angle for the Wronskian $W(u_0,u_1)$, if 
$\#_{(c,d)}(u_0,u_1) = \ceil{\psi(d) / \pi} - \floor{\psi(c) / \pi} -1$ for any $c,d\in(a,b)$.
\end{definition}

Hence we will try to find a more effective Pr\"ufer angle $\psi$ than $\Delta_{1,0}$ for the Wronskian
of two solutions. The right choice for Sturm--Liouville equations was found by Rofe-Beketov
\cite{rb2} (see also the recent monograph \cite{rbk}) and it turns out the analogous
definition is also the right one for Dirac operators \cite{kms2}: 

Let $u_0, v_0$ be two linearly independent solutions of $(\tau_0-\lam) u = 0$ with $W(u_0,v_0)=1$
and let $u_1$ be a solution of $(\tau_1-\lam) u = 0$. Define $\psi$ via
\be\label{def:psi}
W(u_0,u_1)=  -R \sin(\psi), \qquad W(v_0,u_1)= - R \cos(\psi).
\ee
Since $W(u_0,u_1)$ and $W(v_0,u_1)$ cannot vanish simultaneously, $\psi$ is a well-defined
absolutely continuous function, once one value at some point $x_0$ is fixed.

\begin{lemma}
The function $\psi$ defined in \eqref{def:psi} is a Pr\"ufer angle for the Wronskian $W(u_0,u_1)$.
\end{lemma}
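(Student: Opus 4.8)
We must show that the function $\psi$ defined by
\be\nn
W(u_0,u_1) = -R\sin(\psi), \qquad W(v_0,u_1) = -R\cos(\psi)
\ee
is a Pr\"ufer angle for the Wronskian $W(u_0,u_1)$ in the sense of Definition~\ref{def:eqpa}, i.e.\ that $\#_{(c,d)}(u_0,u_1) = \ceil{\psi(d)/\pi} - \floor{\psi(c)/\pi} - 1$ for all $c,d\in(a,b)$.

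**Plan of attack.** The number of weighted sign flips was originally defined through the Pr\"ufer angle $\Delta_{1,0} = \theta_1 - \theta_0$, and by construction $W_x(u_0,u_1) = -\rho_0\rho_1\sin(\Delta_{1,0})$ vanishes exactly when $\Delta_{1,0}\equiv 0 \mod \pi$. My plan is to show that the new angle $\psi$ is an \emph{admissible reparametrization} of $\Delta_{1,0}$: namely, that $\psi$ is continuous, that $\psi \equiv 0 \mod \pi$ precisely at the zeros of $W(u_0,u_1)$, and that $\psi$ crosses each multiple of $\pi$ in the \emph{same direction} as $\Delta_{1,0}$ does. Once these three facts are established, the counting formula for $\psi$ follows from the counting formula for $\Delta_{1,0}$ purely combinatorially, since both the floor/ceiling expression and the count of directed crossings of $\pi\Z$ depend only on how the angle threads through the lattice $\pi\Z$, not on its particular values.

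**Key steps.** First I would verify that $\psi$ is well-defined and absolutely continuous: this is immediate from the nonsimultaneous vanishing of $W(u_0,u_1)$ and $W(v_0,u_1)$, which in turn follows because $u_0,v_0$ are linearly independent with $W(u_0,v_0)=1$ (so $u_1$ cannot be proportional to both at once). Second, since $W(u_0,u_1) = -R\sin(\psi)$, the zeros of the Wronskian are exactly the points where $\psi \in \pi\Z$; this matches the zero set detected by $\Delta_{1,0}$. Third, and most importantly, I would compute the derivative of $\psi$ at such a crossing point. Differentiating the defining relations and using \eqref{dwr}, namely $W'_x(u_0,u_1) = \spr{u_0}{(\phi_1-\phi_0)u_1}$ together with the analogous formula for $W(v_0,u_1)$, one finds that at a point $x_0$ where $\psi(x_0)\equiv 0\mod\pi$ (so $W(v_0,u_1)=\pm R\neq 0$) the sign of $\psi'(x_0)$ is governed by the sign of $-\spr{u_0(x_0)}{\Delta\phi(x_0)u_0(x_0)}$, the same quantity that controls the monotonicity of $\Delta_{1,0}$ in Lemma~\ref{lem:delinc}. (At such a crossing $W(u_0,u_1)$ vanishes, so $u_1$ is proportional to $u_0$ there, which lets me replace $u_1$ by $u_0$ in the relevant bilinear form.) Thus $\psi$ increases through a multiple of $\pi$ exactly when $\Delta_{1,0}$ does, and decreases when $\Delta_{1,0}$ does.

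**Main obstacle.** The routine part is the differentiation bookkeeping; the delicate part is the directional-crossing comparison when $\Delta\phi$ is \emph{not} of definite sign near a zero, since then $\psi'$ may vanish at the crossing and one must argue about the local behavior on each side rather than reading off a single derivative. Here I would invoke the one-sided version of Lemma~\ref{lem:delinc} (which treats the cases $\spr{u_0}{\Delta\phi u_1}$ positive, negative, or zero on a one-sided neighborhood) to match the local crossing pattern of $\psi$ with that of $\Delta_{1,0}$, using that a sign-changing $\Delta\phi$ produces a tangential touch of $\pi\Z$ that contributes $0$ to the count for \emph{both} angles. Assembling these local matchings into the global identity $\ceil{\psi(d)/\pi} - \floor{\psi(c)/\pi} - 1 = \ceil{\Delta_{1,0}(d)/\pi} - \floor{\Delta_{1,0}(c)/\pi} - 1$ is then a telescoping argument over the (locally finite) crossing set, and this yields the claim. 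As the paper notes, the analogous Sturm--Liouville computation is in \cite{kt}/\cite{rb2}, so I expect only the Dirac-specific form of the bilinear forms to require genuine checking.
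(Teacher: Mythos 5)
Your plan is a genuinely different, \emph{dynamic} argument (match the zero sets, then match the direction of every crossing of $\pi\Z$ via derivatives), whereas the paper's proof is static and purely trigonometric: from \eqref{wpruefer} and \eqref{def:psi} one has $-R\sin(\psi)=W(u_0,u_1)=-\rho_{u_0}\rho_{u_1}\sin(\Delta_{1,0})$, so the two sines have the same sign and the same zeros, and it then suffices to check $\psi\equiv\Delta_{1,0} \mod 2\pi$ at every zero of the Wronskian, which is done by comparing signs of $R\cos(\psi)=\rho_{v_0}\rho_{u_1}\sin(\theta_{u_1}-\theta_{v_0})$ under the normalization $\theta_{v_0}-\theta_{u_0}\in(0,\pi)$; no ODE, no Lemma~\ref{lem:delinc}. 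Measured against this, your proposal has a genuine gap at its central step, namely ``$u_1$ is proportional to $u_0$ there, which lets me replace $u_1$ by $u_0$ in the relevant bilinear form.'' At a zero $x_0$ one has $u_1(x_0)=c\,u_0(x_0)$ with $c$ of \emph{either} sign, so $\spr{u_0}{\Delta\phi u_1}(x_0)=c\,\spr{u_0}{\Delta\phi u_0}(x_0)$: for $c<0$ the replacement reverses the sign, and in a proof whose entire content is sign bookkeeping this is exactly the dangerous point. Your conclusions are nevertheless true because $\mathrm{sign}(c)$ enters a second time and cancels, which you never track: differentiating $W(u_0,u_1)=-R\sin(\psi)$ at $x_0$ gives $\psi'(x_0)=-W'(x_0)/(R\cos(\psi(x_0)))$, and $R\cos(\psi(x_0))=-W(v_0,u_1)(x_0)=c$, so with \eqref{dwr} the two factors of $c$ cancel and $\psi'(x_0)=-\spr{u_0}{\Delta\phi u_0}(x_0)$, a $c$-independent quadratic form; an analogous cancellation ($\cos(\Delta_{1,0}(x_0))=\mathrm{sign}(c)$) happens on the $\Delta_{1,0}$ side. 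For the same reason, invoking Lemma~\ref{lem:delinc} verbatim is delicate: what actually governs the increment of $\Delta_{1,0}$ near such a zero is a quadratic form ($-\spr{u_1}{\Delta\phi u_1}$, up to positive factors and terms vanishing with the Wronskian), which differs from the bilinear form you use by $\mathrm{sign}(c)$. Without this bookkeeping the argument would equally well ``prove'' the opposite crossing direction at zeros with $c<0$.

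The second gap is the closing ``telescoping argument over the (locally finite) crossing set.'' Nothing in the hypotheses makes the zero set of $W(u_0,u_1)$ locally finite: since $W'=\spr{u_0}{\Delta\phi u_1}$, zeros can accumulate when $\Delta\phi$ oscillates in sign, and $W$ vanishes identically on any subinterval where $\Delta\phi=0$ once it vanishes at one point of it --- a degenerate situation the paper explicitly admits (cf.\ the hypothesis ``does not vanish identically'' in Theorem~\ref{thm:scw} and the remarks after Definition~\ref{def:wsf}). The floor/ceiling definition of $\#_{(c,d)}$ exists precisely to cover these cases, and a pointwise count of directed crossings does not reduce to it without an extra argument. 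This is what the paper's approach buys: knowing the sines agree in sign everywhere and the angles agree mod $2\pi$ on the whole zero set, one gets that $\floor{\psi/\pi}-\floor{\Delta_{1,0}/\pi}$ is constant, hence $\ceil{\psi(d)/\pi}-\floor{\psi(c)/\pi}=\ceil{\Delta_{1,0}(d)/\pi}-\floor{\Delta_{1,0}(c)/\pi}$, for arbitrary closed zero sets. Your route can be repaired (track $\mathrm{sign}(c)$ explicitly, and replace the telescoping by this constancy observation), but as written both the crossing-direction step and the assembly step have holes.
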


\begin{proof}
Since $W(u_0,u_1)=  -R \sin(\psi) = -\rho_{u_0}\rho_{u_1}\sin(\Delta_{1,0})$ it suffices
to show that $\psi = \Delta_{1,0} \mod 2\pi$ at each zero of the Wronskian. Since we can assume
$\theta_{v_0}-\theta_{u_0}\in(0,\pi)$ (by $W(u_0,v_0)=1$), this follows by
comparing signs of $R\cos(\psi)= \rho_{v_0}\rho_{u_1}\sin(\theta_{u_1}-\theta_{v_0})$.
\end{proof}

\begin{lemma}\label{lem:lemusualprue}
Let $u_0, v_0$ be two linearly independent solutions of $(\tau_0-\lam) u = 0$ with $W(u_0,v_0)=1$
and let $u_1$ be a solution of $(\tau_1-\lam) u = 0$.

Then the Pr\"ufer angle $\psi$ for the Wronskian $W(u_0,u_1)$ defined in \eqref{def:psi}
obeys the differential equation
\be\label{eq:diff1wronski}
\psi' = - \spr{u_0 \cos(\psi) - v_0 \sin(\psi)}{\Delta \phi (u_0 \cos(\psi) - v_0 \sin(\psi))},
\ee
where
\[
\Delta \phi = \phi_1 -\phi_0.
\]
\end{lemma}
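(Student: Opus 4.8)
The plan is to treat $\psi$ exactly as a standard Pr\"ufer angle: differentiate both defining relations in \eqref{def:psi} with respect to $x$, insert the Wronskian derivative formula on the right-hand sides, and eliminate the amplitude derivative $R'$ so as to isolate $\psi'$. The only genuinely new ingredient compared with the routine Sturm--Liouville calculation is the observation that the solution $u_1$ can be recovered from $u_0$, $v_0$, $R$, and $\psi$; this is what converts the resulting bilinear expression into the quadratic form stated in \eqref{eq:diff1wronski}.

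First I would record that, since $u_0$, $v_0$, and $u_1$ all solve equations at the same spectral parameter $\lam$, the computation behind \eqref{dwr} gives $W_x'(u_0,u_1) = \spr{u_0}{\Delta\phi\, u_1}$ and $W_x'(v_0,u_1) = \spr{v_0}{\Delta\phi\, u_1}$, with $\Delta\phi = \phi_1 - \phi_0$ (the $\lam$-terms cancel in the difference of potentials). Here I use that $\phi_0$ is a real symmetric matrix, so that the Wronskian derivative identity applies without conjugation issues for the real-valued solutions at hand.

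Next I would differentiate \eqref{def:psi}. Writing $A := \spr{u_0}{\Delta\phi\, u_1}$ and $B := \spr{v_0}{\Delta\phi\, u_1}$, the two relations yield
\be
-R'\sin(\psi) - R\psi'\cos(\psi) = A, \qquad -R'\cos(\psi) + R\psi'\sin(\psi) = B.
\ee
Forming the combination $A\cos(\psi) - B\sin(\psi)$ cancels $R'$ and leaves $-R\psi' = \spr{u_0\cos(\psi) - v_0\sin(\psi)}{\Delta\phi\, u_1}$, using linearity and the symmetry of $\Delta\phi$. Since the two Wronskians in \eqref{def:psi} cannot vanish simultaneously, $R>0$ and dividing by $-R$ is legitimate; as already noted before the lemma, $\psi$ is absolutely continuous, so these manipulations are valid a.e.

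The key step, and the one I expect to carry the argument, is to express $u_1$ through $u_0$, $v_0$, $R$, and $\psi$. At each point $\{u_0(x), v_0(x)\}$ is a basis of $\R^2$ because $W(u_0,v_0)=1\ne 0$, so I write $u_1 = a\,u_0 + b\,v_0$ with scalar coefficients $a,b$. Bilinearity and antisymmetry of the Wronskian together with $W(u_0,v_0)=1$ give $W(u_0,u_1)=b$ and $W(v_0,u_1)=-a$; comparing with \eqref{def:psi} then forces $b=-R\sin(\psi)$ and $a=R\cos(\psi)$, i.e. $u_1 = R\,(u_0\cos(\psi) - v_0\sin(\psi))$. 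Substituting this into the expression for $-R\psi'$ and cancelling the common factor $R$ produces exactly \eqref{eq:diff1wronski}. Everything else is routine bookkeeping; the substance lies in this identification of $u_1$, which turns the bilinear form $\spr{\,\cdot\,}{\Delta\phi\, u_1}$ into the diagonal quadratic form appearing in the statement.
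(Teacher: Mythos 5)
Your proposal is correct and takes essentially the same approach as the paper: the paper's entire proof is the single line ``observe $R \psi' = -W(u_0,u_1)' \cos(\psi) + W(v_0,u_1)' \sin(\psi)$ and use \eqref{dwr}, \eqref{def:psi} to evaluate the right-hand side,'' which is precisely your elimination of $R'$ followed by your identification $u_1 = R\,(u_0\cos(\psi) - v_0\sin(\psi))$ obtained from \eqref{def:psi} and $W(u_0,v_0)=1$. You have merely written out in full the steps the paper compresses.
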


\begin{proof}
Observe $R \psi' = -W(u_0,u_1)' \cos(\psi) + W(v_0,u_1)' \sin(\psi)$
and use \eqref{dwr}, \eqref{def:psi} to evaluate the right-hand side.
\end{proof}

To proceed we will need the following formula for a second solution of a Dirac equation which can
be verified by a straightforward calculation:

\begin{lemma}[{\cite{rb6}, \cite[Lem. 1]{kms2}}]
Let $u$ be a nontrivial solution of $\tau u =  z u$ and choose $x_0 \in I$. Then
\begin{equation}\label{alembert}
v(x)= \left( 2 \int_{x_0}^x \frac{\spr{u(r)}{\hat\phi(r) u(r)}}{|u(r)|^4} dr
- \I \frac{\sig_2}{|u(x)|^2}\right) u(x),
\end{equation}
where
\be
\hat\phi(x) = (m + \phi_{\rm sc}(x)) \sig_3 + \phi_{\rm am}(x) \sig_1,
\ee
is a second linearly independent solution satisfying $W(u,v)=1$.
\end{lemma}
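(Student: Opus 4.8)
The plan is to verify the two assertions---that $v$ solves $\tau v = z v$ and that $W(u,v)=1$---by direct differentiation, exploiting the first-order form of the Dirac equation. Throughout I abbreviate $J = \I\sig_2 = \left(\begin{smallmatrix}0&1\\-1&0\end{smallmatrix}\right)$, which is real and satisfies $J^2 = -\id$ and $J^\top = -J$; the latter gives $\spr{u}{Ju}=0$ for real $u$. Multiplying $\tau u = z u$ on the left by $\I\sig_2$ turns it into $u' = J(z-\phi)u$, and the identical manipulation shows that the claim $\tau v = z v$ is equivalent to $v' = J(z-\phi)v$, i.e.\ $v$ solves the same first-order system as $u$. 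I work with real-valued $u$ (the relevant case $z\in\R$), so that $|u|^2 = u_1^2+u_2^2$ and every quantity below is real. It is convenient to split $v = g\,u + w$, where $g(x)=2\int_{x_0}^x \spr{u}{\hat\phi u}/|u|^4\,dr$ and $w = -\I|u|^{-2}\sig_2 u = -|u|^{-2}Ju$.

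First I would dispose of the Wronskian, which is the easier half. The term $g\,u$ contributes nothing to $W(u,\cdot)$, so $W_x(u,v)=W_x(u,w)$, and a one-line computation using $-\I\sig_2 u = (-u_2,u_1)^\top$ gives $W_x(u,w) = |u|^{-2}(u_1^2+u_2^2)=1$. Hence $W(u,v)=1$ identically, which in particular shows that $u$ and $v$ are linearly independent.

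For the differential equation I would substitute $v = g\,u + w$ into the target identity $v' = J(z-\phi)v$. Since $g\,u' = g\,J(z-\phi)u$ cancels the corresponding term on the right, it remains to establish the reduced identity $g'\,u + w' = J(z-\phi)\,w$. Differentiating $w = -|u|^{-2}Ju$ and using $u' = J(z-\phi)u$ together with $J^2=-\id$ yields $w' = |u|^{-4}(|u|^2)'\,Ju + |u|^{-2}(z-\phi)u$. On the right I would compute $J(z-\phi)J = -z\,\id - J\phi J$ and invoke the Pauli-matrix relations $J\sig_1 J = \sig_1$, $J\sig_3 J = \sig_3$, $J\,\id\, J = -\id$ to get $J\phi J = -\phi_{\rm el}\id + \hat\phi$, so that $J(z-\phi)\,w = |u|^{-2}\bigl((z-\phi_{\rm el})u + \hat\phi u\bigr)$. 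Since $\phi = \phi_{\rm el}\id + \hat\phi$, the common term $|u|^{-2}(z-\phi_{\rm el})u$ cancels, and after multiplying through by $|u|^4$ the reduced identity collapses to the purely algebraic statement
\be
2\,\spr{u}{\hat\phi u}\,u + (|u|^2)'\,Ju = 2\,|u|^2\,\hat\phi u ,
\ee
where I have used $g'\,|u|^4 = 2\spr{u}{\hat\phi u}$ from the definition of $g$.

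The main obstacle---really the only point requiring care---is verifying this last identity, since it is precisely here that the particular choice of integrand in $g$ is forced: it is the unique scalar that makes the $\hat\phi$-contributions balance. I would check it by writing $\hat\phi = a\sig_3 + b\sig_1$ with $a = m+\phi_{\rm sc}$, $b=\phi_{\rm am}$, so that $\spr{u}{\hat\phi u} = a(u_1^2-u_2^2)+2b\,u_1u_2$, and computing $(|u|^2)' = 2\spr{u}{u'} = -2\spr{u}{J\phi u} = 2\bigl[2a\,u_1u_2 - b(u_1^2-u_2^2)\bigr]$ together with $Ju = (u_2,-u_1)^\top$. A short expansion shows that both sides equal $2|u|^2\,(a u_1 + b u_2,\; b u_1 - a u_2)^\top$, completing the verification.
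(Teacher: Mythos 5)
Your proof is correct and is exactly the ``straightforward calculation'' the paper invokes (it gives no argument of its own beyond citing Rofe-Beketov and Schmidt): you reduce $\tau v = zv$ to the first-order system $v' = \I\sig_2(z-\phi)v$, split off $-\I\sig_2 u/|u|^2$, and verify the remaining algebraic identity, which is precisely what forces the integrand defining the scalar factor. Your restriction to real-valued $u$ is also the right reading of the statement, since already $W(u,v)=1$ fails for genuinely complex $u$, and the lemma is only applied in the paper to real solutions at real spectral parameters.
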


Now we will choose $v_0$ to be given by \eqref{alembert} and, following Schmidt \cite{kms2}, perform a Kepler transformation
\be
\cot(\vphi(x)) = \frac{1}{x} \left( \cot(\psi(x)) -2 \int_{a}^x \frac{\spr{u_0(r)}{\hat\phi_0(r) u_0(r)}}{|u(r)|^4} dr  \right)
\ee
to obtain
\begin{align} \nn
\vphi'(x) =& \frac{1}{x} \bigg( 2 \frac{\spr{u_0(x)}{\hat\phi_0(x) u_0(x)}}{|u_0(x)|^4} \sin^2(\varphi(x)) +
\sin(\varphi(x)) \cos(\varphi(x)) - \\ \nn
&\Big\langle \Big(\cos(\varphi(x)) - \I \frac{\sin(\varphi(x))}{|u_0(x)|^2} \sig_2 \Big) u_0(x), \\
&x^2\Delta\phi(x)\Big(\cos(\varphi(x)) - \I \frac{\sin(\varphi(x))}{|u_0(x)|^2} \sig_2 \Big) u_0(x)\Big\rangle \bigg).
\end{align}
Here we assume that $a>0$ is regular and $b=\infty$ without loss of generality.
Under the further assumption that $|u_0(x)|$, $|u_0(x)|^{-1}$, and $x^2\Delta\phi(x)$ are bounded this
simplifies to
\be
\vphi'(x) = \frac{1}{x}\Big(A(x) \sin^2(\vphi(x)) + \sin(\vphi(x))\cos(\vphi(x)) + B(x) \cos^2(\vphi(x))\Big) + O(x^{-2}),
\ee
where
\be
A(x) = 2\frac{\spr{u_0(x)}{\hat\phi_0(x) u_0(x)}}{|u_0(x)|^4} \quad \textnormal{and}\quad
B(x) = -\spr{u_0(x)}{x^2 \Delta\phi(x) u_0(x)}.
\ee
Now we turn to the case where $\phi_0(x)$ is periodic with period $\alpha>0$ and choose $u_0$ to be the
(anti-)periodic solution at a band edge. Taking averages
\be
\ol{\vphi}(x) = \frac{1}{\alpha} \int_x^{x+\alpha} \vphi(r) dr
\ee
the above differential equation turns into (see \cite[Section~5]{kt3})
\be
\ol{\vphi}'(x) = \frac{1}{x}\Big(\ol{A} \sin^2(\ol{\vphi}(x)) + \sin(\ol{\vphi}(x))\cos(\ol{\vphi}(x)) + B(x) \cos^2(\ol{\vphi}(x))\Big) + O(x^{-2}),
\ee
where
\begin{align}\nn
\ol{A} &= \frac{2}{\alpha} \int_0^\alpha \frac{\spr{u_0(x)}{\hat\phi_0(x) u_0(x)}}{|u_0(x)|^4} dx,\\
\ol{B}(x) &= - \frac{1}{\alpha} \int_x^{x+\alpha} \spr{u_0(r)}{r^2 \Delta\phi(r) u_0(r)}.
\end{align}
Moreover, if $\phi_1(x)$ is given by \eqref{asumper} then one computes
\be
\ol{B}(x) = -\frac{1}{4} \sum_{k=0}^n \frac{x^2}{L_k(x)^2} \ol{B}_k + o(x^2 L_n(x)^{-2}).
\ee
Now we use the following result:

\begin{lemma}[{\cite[Lemma~4.7]{kt3}}]\label{lem:odebnd}
Fix some $n\in\N_0$, let $Q$ be locally integrable on $(a,\infty)$ and abbreviate
\[
Q_n(x) = -\frac{1}{4} \sum_{j=0}^{n-1} \frac{1}{L_j(x)^2}.
\]
Then all solutions of the differential equation
\be
\vphi'(x) = \frac{1}{x} \left(\sin^2(\vphi(x)) + \sin(\vphi(x)) \cos(\vphi(x)) -
x^2 Q(x) \cos^2(\vphi(x)) \right) + o\Big(\frac{ x}{L_n(x)^2}\Big)
\ee
tend to $\infty$ if 
\[
\limsup_{x\to\infty} L_n(x)^2 \left(Q(x) - Q_n(x)\right) < -\frac{1}{4}
\]
and are bounded from above if 
\[
\liminf_{x\to\infty} L_n(x)^2 \left(Q(x) - Q_n(x)\right) > -\frac{1}{4}.
\]
In the last case all solutions are bounded under the additional assumption
$Q =Q_n(x) + O(L_n(x)^{-2})$.
\end{lemma}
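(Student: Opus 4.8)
The plan is to read $\vphi(x)\to\infty$ as an oscillation statement (the angle crossing successive multiples of $\pi$) and $\vphi$ bounded above as nonoscillation, and then to decide between the two by a Riccati comparison argument against explicitly solvable model equations. First I would record the monotone dependence of the angle equation on $Q$: writing the right-hand side as $F(x,\vphi)=\frac{1}{x}(\sin^2\vphi+\sin\vphi\cos\vphi-x^2Q(x)\cos^2\vphi)+o(x/L_n(x)^2)$, the coefficient of $\cos^2\vphi$ is $-x^2Q$, so a pointwise inequality $Q_1\le Q_2$ forces the corresponding solutions (with equal data at some large $x_0$) to obey $\vphi_1\ge\vphi_2$ thereafter, by the standard comparison principle for scalar ODEs. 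Consequently, the $\limsup$/$\liminf$ hypotheses let me replace $Q$ by the two sharp model potentials $Q_n(x)-\frac{c}{4L_n(x)^2}$ with $c$ just above, respectively just below, $1$ (up to the absorbed error term), so that it suffices to analyse these models.

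Second, I would pin down why $-\frac{1}{4}$ is critical. The leading term of $-x^2Q_n(x)$ is the constant $+\frac{1}{4}$ (the $j=0$ summand contributes $x^2/L_0(x)^2=1$), and
\[
\sin^2\vphi+\sin\vphi\cos\vphi+\frac{1}{4}\cos^2\vphi=\big(\sin\vphi+\frac{1}{2}\cos\vphi\big)^2\ge 0.
\]
Thus the model right-hand side is, to leading order, a nonnegative perfect square degenerating exactly along $\tan\vphi=-\frac{1}{2}$. A linear (symplectic) change of Pr\"ufer coordinates turns $\sin\vphi+\frac{1}{2}\cos\vphi$ into a pure sine, putting the base equation into Euler normal form $x\,\vphi'=\sin^2\vphi-\gamma\cos^2\vphi+\cdots$. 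Passing to logarithmic time $s=\log x$ makes this autonomous, $\dot\vphi=\sin^2\vphi-\gamma\cos^2\vphi+o(1)$, whose equilibria solve $\tan^2\vphi=\gamma$ and hence exist precisely when $\gamma\ge 0$; unwound, this is exactly the threshold $\limsup x^2Q=-\frac14$. When no equilibrium exists the drift has a definite sign and $\vphi$ increases without bound (oscillatory); when equilibria exist $\vphi$ is trapped between consecutive ones and stays bounded (nonoscillatory). This disposes of the base case $n=0$, where the absorbed error is genuinely $o(1)$ in $s$ and a routine perturbation of the phase line applies.

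Third, to reach general $n$ I would argue by induction, equivalently by iterating the Kepler-type transformation used above. Each step strips off the constant $-\frac{1}{4}$ together with the current leading logarithmic correction and rescales the independent variable by one further logarithm: under $s=\log x$ the next term $x^2/L_1(x)^2=1/(\log x)^2=1/s^2$ plays in the $s$-equation exactly the role that the constant $\frac{1}{4}$ played in the $x$-equation, so the level-$n$ criterion on the $x$-scale becomes the level-$(n-1)$ criterion on the $s$-scale, with $Q_n,L_n$ advancing correctly to $Q_{n-1},L_{n-1}$. Checking that the absorbed error $o(x/L_n(x)^2)$ transforms into $o(s/L_{n-1}(s)^2)$ closes the induction. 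The one-sided $\liminf$ hypothesis merely produces an upper trapping barrier (boundedness above), whereas the stronger final hypothesis $Q=Q_n+O(L_n^{-2})$ pins a lower barrier as well, so that in the nonoscillatory case the equilibria persist down to the bottom level and solutions are genuinely bounded.

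The main obstacle I anticipate is the borderline, degenerate case where the two model equilibria coalesce into the single double root $\tan\vphi=-\frac{1}{2}$. There the linearisation vanishes, hyperbolic perturbation theory cannot decide on which side $\vphi$ settles, and it is precisely the subleading $L_n^{-2}$ correction, measured against the sharp constant $\frac{1}{4}$, that tips the balance. Making this rigorous means constructing honest sub- and supersolutions of the scalar Riccati equation for the deviation $\eta=\vphi-\vphi_*$ from the degenerate direction, with the iterated logarithms supplying the explicit ansatz, and controlling the $o(\cdot)$ error uniformly through all $n$ transformations; this delicate bookkeeping near the merged equilibrium, rather than any single estimate, is where the real work lies.
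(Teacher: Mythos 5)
The paper contains no proof of this lemma for you to be compared against: it is quoted verbatim from Kr\"uger--Teschl \cite[Lemma~4.7]{kt3} and used here as a black box. Your proposal in fact reconstructs the strategy of that reference's proof: the identity $\sin^2\vphi+\sin\vphi\cos\vphi+\frac{1}{4}\cos^2\vphi=(\sin\vphi+\frac{1}{2}\cos\vphi)^2$ identifying $-\frac{1}{4}$ as the critical constant, the base case $n=0$ via phase-line analysis of the autonomous equation in $s=\log x$ (equilibria exist up to the threshold and, being non-degenerate under the strict hypotheses, persist under $o(1)$ perturbations), and the inductive step via an iterated Kepler/Rofe-Beketov transformation under which $x^2/L_j(x)^2$ becomes $1/L_{j-1}(s)^2$, so that the level-$n$ criterion in the variable $x$ descends to the level-$(n-1)$ criterion in the variable $s$. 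Your closing remark also correctly locates the real technical work: the sub/supersolution construction near the degenerate (double-root) direction and the uniform control of the errors through the $n$ transformations, which is exactly what occupies the corresponding section of \cite{kt3}.

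There is, however, one concrete flaw in your first step. The error term $o(x/L_n(x)^2)$ is additive and carries no factor of $\cos^2\vphi$, so it cannot simply be ``absorbed'' into $Q$: the pointwise inequality between right-hand sides that your scalar comparison principle requires fails wherever $\cos\vphi$ is near $0$, regardless of how much room the strict hypotheses provide, because there the $Q$-term vanishes while the error need not. The standard repair is to bound the error by $\eps(x)\,\frac{x}{L_n(x)^2}\,(\sin^2\vphi+\cos^2\vphi)$ with $\eps(x)\to 0$, use that $x/L_n(x)^2\le 1/x$ eventually, and absorb the two pieces into the $\sin^2\vphi$ and $\cos^2\vphi$ coefficients separately; since the threshold is governed by the discriminant of $(1\pm\eps)t^2+t+C$, which depends continuously on $\eps$, the strict inequalities in the hypotheses survive this perturbation. (Equivalently: argue separately on the region $|\cos\vphi|\ge c>0$, where absorption into the $\cos^2\vphi$ coefficient is legitimate, and on its complement, where the drift is $(1+o(1))/x>0$ and dominates the error.) With this adjustment, and with the transformation lemma and the barrier construction actually carried out rather than described, your outline matches the cited proof.
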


Now this lemma implies Theorem~\ref{thm:periodic} if $\ol{A}=1$. However, if $\ol{A}>0$
we can easily reduce it to the case $\ol{A}=1$ by the simple scaling $u_0(x) \to (\ol{A})^{1/2} u_0(x)$.
which renders $\ol{A} \to 1$ and $\ol{B}_k \to \ol{A} \ol{B}_k$. Similarly, if $\ol{A}<0$ we can
reduce it to the case $\ol{A}>0$ via the transformation $\vphi \to - \vphi$ which renders
$\ol{A} \to - \ol{A}$, $\ol{B}_k \to -\ol{B}_k$. Finally, in the case $\ol{A}=0$ the result follows by
using Proposition~1 from \cite{kms2} (Lemma~5.1 in \cite{kt3}) in place of the above lemma.

\section*{Acknowledgments}

We thank K.M.\ Schmidt for discussions on this topic.
Gerald Teschl gratefully acknowledges the hospitality of the Department of Mathematics of
the University of Missouri--Columbia where parts of this paper were written.

\end{document}